\newcolumntype{C}[1]{>{\centering\arraybackslash}p{#1}}
\newcolumntype{L}[1]{>{\arraybackslash}p{#1}}
\newcommand{\val}[1]{[\![{#1}]\!]}
\newcommand{\descr}[1]{(\![{#1}]\!)}
\def\fCenter{{\mbox{$\ \vdash\ $}}}
\newcommand{\fns}{\footnotesize}
\def\mc{\multicolumn}
\newcommand{\xtop}{\ensuremath{\top}\xspace}
\newcommand{\xbot}{\ensuremath{\bot}\xspace}
\newcommand{\xand}{\ensuremath{\wedge}\xspace}
\newcommand{\xor}{\ensuremath{\vee}\xspace}
\newcommand{\XTOP}{\ensuremath{\hat{\top}}\xspace}
\newcommand{\XBOT}{\ensuremath{\check{\bot}}\xspace}
\newcommand{\XAND}{\ensuremath{\:\hat{\wedge}\:}\xspace}
\newcommand{\XOR}{\ensuremath{\:\check{\vee}\:}\xspace}
\newcommand{\topi}{\ensuremath{\mathrm{t}}_I\xspace}
\newcommand{\boti}{\ensuremath{\mathrm{f}}_I\xspace}
\newcommand{\topc}{\ensuremath{\mathrm{t}}_C\xspace}
\newcommand{\botc}{\ensuremath{\mathrm{f}}_C\xspace}
\newcommand{\topli}{\ensuremath{1}_I\xspace}
\newcommand{\botli}{\ensuremath{0}_I\xspace}
\newcommand{\toplc}{\ensuremath{1}_C\xspace}
\newcommand{\botlc}{\ensuremath{0}_C\xspace}
\newcommand{\andc}{\ensuremath{\cap}_C\xspace}
\newcommand{\orc}{\ensuremath{\cup}_C\xspace}
\newcommand{\andi}{\ensuremath{\cap}_I\xspace}
\newcommand{\ori}{\ensuremath{\cup}_I\xspace}
\newcommand{\andlc}{\ensuremath{\sqcap}_C\xspace}
\newcommand{\orlc}{\ensuremath{\sqcup}_C\xspace}
\newcommand{\andli}{\ensuremath{\sqcap}_I\xspace}
\newcommand{\orli}{\ensuremath{\sqcup}_I\xspace}
\newcommand{\ANDC}{\ensuremath{\:\hat{\cap}_C\:}\xspace}
\newcommand{\ORC}{\ensuremath{\:\check{\cup}_C\:}\xspace}
\newcommand{\ANDI}{\ensuremath{\:\hat{\cap}_I\:}\xspace}
\newcommand{\ORI}{\ensuremath{\:\check{\cup}_I\:}\xspace}
\newcommand{\TOPI}{\ensuremath{\:\hat{\mathrm{t}}_I\:}\xspace}
\newcommand{\BOTI}{\ensuremath{\:\check{\mathrm{f}}_I\:}\xspace}
\newcommand{\TOPC}{\ensuremath{\:\hat{\mathrm{t}}_C\:}\xspace}
\newcommand{\BOTC}{\ensuremath{\:\check{\mathrm{f}}_C\:}\xspace}
\newcommand{\ANDLC}{\ensuremath{\:\hat{\sqcap}_C\:}\xspace}
\newcommand{\ORLC}{\ensuremath{\:\check{\sqcup}_C\:}\xspace}
\newcommand{\ANDLI}{\ensuremath{\:\hat{\sqcap}_I\:}\xspace}
\newcommand{\ORLI}{\ensuremath{\:\check{\sqcup}_I\:}\xspace}
\newcommand{\TOPLI}{\ensuremath{\:\hat{1}_I\:}\xspace}
\newcommand{\BOTLI}{\ensuremath{\:\check{0}_I\:}\xspace}
\newcommand{\TOPLC}{\ensuremath{\:\hat{1}_C\:}\xspace}
\newcommand{\BOTLC}{\ensuremath{\:\check{0}_C\:}\xspace}
\newcommand{\wcircc}{\ensuremath{\circ}_C\,\xspace}
\newcommand{\wcirci}{\ensuremath{\circ}_I\,\xspace}
\newcommand{\bcircc}{\ensuremath{{\bullet_C}}\,\xspace}
\newcommand{\bcirci}{\ensuremath{{\bullet_I}}\,\xspace}
\newcommand{\bboxi}{\ensuremath{\blacksquare}_I\,\xspace}
\newcommand{\bboxc}{\ensuremath{\blacksquare}_C\,\xspace}
\newcommand{\bdiai}{\ensuremath{\Diamondblack}_I\,\xspace}
\newcommand{\bdiac}{\ensuremath{\Diamondblack}_C\,\xspace}
\newcommand{\wboxi}{\ensuremath{\Box}_I\,\xspace}
\newcommand{\wboxc}{\ensuremath{\Box}_C\,\xspace}
\newcommand{\wdiai}{\ensuremath{\Diamond}_I\,\xspace}
\newcommand{\wdiac}{\ensuremath{\Diamond}_C\,\xspace}
\newcommand{\bboxl}{\ensuremath{\Diamondblack}_I\ \xspace}
\newcommand{\WCIRCC}{\ensuremath{\tilde{\circ}_C\,}\xspace}
\newcommand{\WCIRCI}{\ensuremath{\tilde{\circ}_I\,}\xspace}
\newcommand{\WCIRC}{\ensuremath{\tilde{\circ}\,}\xspace}
\newcommand{\BCIRCC}{\ensuremath{\:\tilde{{\bullet}}_C\,}\xspace}
\newcommand{\BCIRCI}{\ensuremath{\:\tilde{{\bullet}}_I\,}\xspace}
\newcommand{\BCIRC}{\ensuremath{\:\tilde{{\bullet}}\,}\xspace}
\newcommand{\WBOXI}{\ensuremath{\:\check{\Box}_I\,}\xspace}
\newcommand{\WBOXC}{\ensuremath{\:\check{\Box}_C\,}\xspace}
\newcommand{\WBOX}{\ensuremath{\:\check{\Box}\,}\xspace}
\newcommand{\BBOXI}{\ensuremath{\:\check{\blacksquare}_I\,}\xspace}
\newcommand{\BBOXC}{\ensuremath{\:\check{\blacksquare}_C\,}\xspace}
\newcommand{\BDIAI}{\ensuremath{\:\hat{\Diamondblack}_I\,}\xspace}
\newcommand{\BDIAC}{\ensuremath{\:\hat{\Diamondblack}_C\,}\xspace}
\newcommand{\BDIA}{\ensuremath{\:\hat{\Diamondblack}\,}\xspace}
\newcommand{\WDIAI}{\ensuremath{\:\hat{\Diamond}_I\,}\xspace}
\newcommand{\WDIAC}{\ensuremath{\:\hat{\Diamond}_C\,}\xspace}
\begin{document}
\title{Logics for Rough Concept Analysis\thanks{The research of the fourth author is supported by the NWO Vidi grant 016.138.314, the NWO Aspasia grant 015.008.054, and a Delft
Technology Fellowship awarded to the fourth author in 2013.}}
%
%
\author{Giuseppe Greco\inst{1}\orcidID{0000-0002-4845-3821} \and
Peter Jipsen\inst{2} \and
Krishna Manoorkar\inst{3} \and
Alessandra Palmigiano\inst{4,5}\orcidID{0000-0001-9656-7527}\and
Apostolos Tzimoulis\inst{2}
}
\authorrunning{Greco, Jipsen, Manoorkar, Palmigiano, Tzimoulis}
%
\institute{Utrecht University, NL \and
Chapman University, US \and
Indian Institute of Technology, Kanpur, India \and
Delft University of Technology, NL \and
Department of Pure and Applied Mathematics, University of Johannesburg, SA
%
}
\maketitle              
\begin{abstract}
Taking an algebraic perspective on the basic structures of Rough Concept Analysis as the starting point, in this paper we introduce some varieties of lattices expanded with normal modal operators which can be  regarded as the natural rough algebra counterparts of certain subclasses of rough formal contexts, and introduce proper display calculi for the logics associated with these varieties which are sound, complete,  conservative and with uniform cut elimination and subformula property. These calculi modularly extend the multi-type calculi for rough algebras to a `nondistributive' (i.e.~general lattice-based) setting.

\keywords{Rough Set Theory \and Formal Concept Analysis  \and modal logic \and lattice-based logics \and algebras for rough sets \and structural proof theory.}
\end{abstract}

\section{Introduction}

This paper continues a line of investigation started in \cite{GLMP18} and aimed at introducing sequent calculi for the logics of varieties of `rough algebras', introduced and discussed in \cite{banerjee1996rough,saha2014algebraic}. The `rough algebras' considered in  the present paper are {\em nondistributive} (i.e.~general lattice-based) generalizations of those of \cite{saha2014algebraic}; specifically, they are  varieties of lattices expanded with normal modal operators, natural examples of which arise in connection with (certain subclasses of) {\em rough formal contexts}, introduced by Kent in \cite{kent1996rough} as the basic notion of {\em Rough Concept Analysis} (RCA),  a synthesis of Rough Set Theory  \cite{pawlak1998rough} and Formal Concept Analysis \cite{ganter2012formal}. The core idea of Kent's approach is to use a given indiscernibility relation $E$ on the objects of a formal context $(A, X, I)$ to generate  $E$-definable approximations  $R$ and $S$ of the relation $I$ such that $S\subseteq I\subseteq R$. The starting point of our approach is that  $R$ and $S$ can be used  to generate tuples of adjoint normal  modal operators $\langle S\rangle \dashv [S]$ and $\langle R\rangle \dashv [R]$. We identify conditions under which $[S]$ and $\langle R\rangle$ are interior operators and  $[R]$ and $\langle S\rangle$ are closure operators. This provides the basic algebraic framework, which we axiomatically extend so as to define `nondistributive' counterparts of the varieties introduced in \cite{saha2014algebraic}.

From an algebraic perspective, it is interesting to observe that, unlike $\langle S\rangle$ and $[S]$, the modal operators $\langle R\rangle$ and $[R]$ play the reverse  roles they usually have in rough set theory: namely, $[R]$, being an {\em inflationary} map, plays naturally the role of the {\em closure} operator  providing the upper lax approximation of a given formal concept, and similarly $\langle R\rangle$, being a {\em deflationary} map, plays the role of the {\em interior} operator, providing the lower lax approximation of a given formal concept.

From a proof-theoretic perspective, these properties make it possible to introduce a modular generalization of the {\em multi-type approach} taken in  \cite{GLMP18} to endow the logics of `rough algebras' with analytic  calculi, so as to adapt it to a  `nondistributive' propositional base.
For the sake of introducing the structural counterparts of the lattice connectives $\wedge$ and $\vee$ (the reasons for which are explained below), our basic calculus does not have the display property, since the usual display rules for $\wedge$ and $\vee$ are not sound in the general lattice setting. However, the cut elimination and subformula property for the calculi defined in Section \ref{ssec:Display calculus} can be straightforwardly verified by appealing to the meta-theorem of \cite{TrendsXIII}.  Another interesting departure from the calculi of \cite{GLMP18} concerns the counterparts of the IA3 condition, which in the present paper comes in two variants: the lower (strict), and the upper (lax).
The inequality corresponding to the lower variant of IA3, which was analytic in the presence of distributivity, is not analytic inductive in the absence of distributivity (cf.~\cite[Definition 55]{greco2016unified}). However, the inequality corresponding to the upper variant of IA3 is analytic inductive, and hence can be captured in terms of an analytic structural rule. 

\section{Preliminaries}
\label{ssec:prelim FCA}
The purpose of this section, which is based   on \cite[Appendix]{TarkPaper} and \cite{conradie2016categories} and \cite[Sections 2.3 and 2.4]{KriPalNac18}, is to briefly recall the basic notions of the theory of  {\em enriched formal contexts} (cf.~Definition \ref{def:enriched formal context}) while introducing the notation which will be used throughout the paper.
For any relation $T\subseteq U\times V$, and any $U'\subseteq U$  and $V'\subseteq V$, let
\[T^{(0)}[V']:=\{u\mid \forall v(v\in V'\Rightarrow uTv) \}   \quad\quad  T^{(1)}[U']:=\{v\mid \forall u(u\in U'\Rightarrow uTv) \}.\]
It can be easily verified that $U'\subseteq T^{(0)}[V']$ iff $V'\subseteq T^{(1)}[U']$, that  $V_1\subseteq V_2\subseteq V$ (resp.~$U_1\subseteq U_2\subseteq U$) implies that $T^{(0)}[V_2]\subseteq T^{(0)}[V_1]$ (resp.~$T^{(1)}[U_2]\subseteq T^{(1)}[U_1]$), and  $S\subseteq T\subseteq U\times V$ implies that $S^{(0)}[V']\subseteq T^{(0)}[V']$ and $S^{(1)}[U']\subseteq T^{(1)}[U']$ for all $V'\subseteq V$ and $U'\subseteq U$.

  {\em Formal contexts}, or {\em polarities},  are structures $\mathbb{P} = (A, X, I)$ such that $A$ and $X$ are sets, and $I\subseteq A\times X$ is a binary relation. Intuitively, formal contexts can be understood as abstract representations of databases \cite{ganter2012formal}, so that  $A$ represents a collection of {\em objects}, $X$ as a collection of {\em features}, and for any object $a$ and feature $x$, the tuple $(a, x)$ belongs to $I$ exactly when object $a$ has feature $x$. In what follows, we use $a, b$ (resp.~$x, y$) for elements of $A$ (resp.~$X$), and $B$ (resp.~$Y$) for subsets of $A$ (resp.~of $X$).

 As is well known, for every formal context $\mathbb{P} = (A, X, I)$, the pair of maps \begin{center}$(\cdot)^\uparrow: \mathcal{P}(A)\to \mathcal{P}(X)\quad \mbox{ and } \quad(\cdot)^\downarrow: \mathcal{P}(X)\to \mathcal{P}(A),$\end{center}
respectively defined by the assignments $B^\uparrow: = I^{(1)}[B]$ and $Y^\downarrow: = I^{(0)}[Y]$,  form a Galois connection 
 and hence induce the closure operators $(\cdot)^{\uparrow\downarrow}$ and $(\cdot)^{\downarrow\uparrow}$ on $\mathcal{P}(A)$ and on $\mathcal{P}(X)$ respectively.\footnote{When $B=\{a\}$ (resp.\ $Y=\{x\}$) we write $a^{\uparrow\downarrow}$ for $\{a\}^{\uparrow\downarrow}$ (resp.~$x^{\downarrow\uparrow}$ for $\{x\}^{\downarrow\uparrow}$).} Moreover, the fixed points of these closure operators form complete sub-$\bigcap$-semilattices  of $\mathcal{P}(A)$ and $\mathcal{P}(X)$  respectively, and hence are complete lattices which are dually isomorphic to each other via the restrictions of the maps $(\cdot)^{\uparrow}$ and $(\cdot)^{\downarrow}$. 
 This motivates the following
\begin{definition}
For every formal context $\mathbb{P} = (A, X, I)$, a {\em formal concept} of $\mathbb{P}$ is a pair $c = (B, Y)$ such that $B\subseteq A$, $Y\subseteq X$, and $B^{\uparrow} = Y$ and $Y^{\downarrow} = B$.  The set $B$ is the {\em extension} of  $c$, which we will sometimes denote $\val{c}$, and $Y$ is the {\em intension} of $c$, sometimes denoted $\descr{c}$. Let $L(\mathbb{P})$ denote the set of the formal concepts of $\mathbb{P}$. Then the {\em concept lattice} of $\mathbb{P}$ is the complete lattice  \begin{center}$\mathbb{P}^+: = (L(\mathbb{P}), \bigwedge, \bigvee),$\end{center} where for every $\mathcal{X}\subseteq L(\mathbb{P})$, \begin{center}$\bigwedge \mathcal{X}: = (\bigcap_{c\in \mathcal{X}} \val{c}, (\bigcap_{c\in \mathcal{X}} \val{c})^{\uparrow})\quad \mbox{ and }\quad \bigvee \mathcal{X}: = ((\bigcap_{c\in \mathcal{X}} \descr{c})^{\downarrow}, \bigcap_{c\in \mathcal{X}} \descr{c}). $\end{center}
Then clearly, $\top^{\mathbb{P}^+}: = \bigwedge\varnothing = (A, A^{\uparrow})$ and $\bot^{\mathbb{P}^+}: = \bigvee\varnothing = (X^{\downarrow}, X)$, and the partial order underlying this lattice structure is defined as follows: for any $c, d\in L(\mathbb{P})$, \begin{center}$c\leq d\quad \mbox{ iff }\quad \val{c}\subseteq \val{d} \quad \mbox{ iff }\quad \descr{d}\subseteq \descr{c}.$\end{center}
\end{definition}
\begin{theorem} \label{thm:Birkhoff} (Birkhoff's theorem, main theorem of FCA) Any complete lattice $\mathbb{L}$ is isomorphic to the concept lattice $\mathbb{P}^+$ of some formal context $\mathbb{P}$.
\end{theorem}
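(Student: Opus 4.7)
The plan is to produce a formal context $\mathbb{P}$ directly from the underlying set and order of $\mathbb{L}$ and exhibit an explicit isomorphism between $\mathbb{L}$ and $\mathbb{P}^+$. Concretely, I would set $\mathbb{P} := (L, L, \leq)$, where $L$ is the carrier of $\mathbb{L}$ and $\leq$ is its lattice order, and define $\varphi : \mathbb{L} \to \mathbb{P}^+$ by $\varphi(a) := ({\downarrow}a, {\uparrow}a)$, where ${\downarrow}a = \{b \in L : b \leq a\}$ and ${\uparrow}a = \{c \in L : a \leq c\}$.

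The first step is to verify that $\varphi$ is well-defined, i.e.\ that $({\downarrow}a, {\uparrow}a) \in L(\mathbb{P})$ for every $a \in L$. Unfolding the definitions, $({\downarrow}a)^{\uparrow} = \{c \in L : b \leq c \text{ for all } b \leq a\}$; since $a \in {\downarrow}a$, this set is contained in ${\uparrow}a$, while the reverse inclusion is immediate from transitivity. Hence $({\downarrow}a)^{\uparrow} = {\uparrow}a$, and a dual argument gives $({\uparrow}a)^{\downarrow} = {\downarrow}a$. The second step is to show that $\varphi$ is an order-isomorphism. Order-preservation and order-reflection follow at once from the equivalence $a \leq b \Leftrightarrow {\downarrow}a \subseteq {\downarrow}b$. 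For surjectivity, given any concept $(B, Y) \in L(\mathbb{P})$, let $a := \bigvee^{\mathbb{L}} B$; by the universal property of the join, $Y = B^{\uparrow} = \{c : b \leq c \text{ for all } b \in B\} = \{c : a \leq c\} = {\uparrow}a$, so $B = Y^{\downarrow} = ({\uparrow}a)^{\downarrow} = {\downarrow}a$ by the previous step, and therefore $(B, Y) = \varphi(a)$. Because both $\mathbb{L}$ and $\mathbb{P}^+$ are complete lattices, any order-isomorphism between them is automatically a complete lattice isomorphism, which finishes the argument.

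The one genuinely non-formal step is the surjectivity argument: it is there that the completeness of $\mathbb{L}$ is essential, since we need $\bigvee^{\mathbb{L}} B$ to exist for an arbitrary subset $B \subseteq L$ in order to produce a pre-image for the arbitrary concept $(B, Y)$. Everything else reduces to routine bookkeeping with the Galois connection $((\cdot)^{\uparrow}, (\cdot)^{\downarrow})$ induced on $\mathcal{P}(L)$ by $I := {\leq} \subseteq L \times L$, together with the two key facts that principal downsets and principal upsets of $\mathbb{L}$ are closed under the induced closure operators and that the extent $B$ of a concept determines, and is determined by, its join.
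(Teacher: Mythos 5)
Your proof is correct and is the standard argument for the main theorem of FCA: take $\mathbb{P}=(L,L,\leq)$ and check that $a\mapsto({\downarrow}a,{\uparrow}a)$ is a surjective order-embedding onto $L(\mathbb{P})$, with completeness used exactly where you say, namely to produce $\bigvee B$ as a preimage of an arbitrary concept $(B,Y)$. The paper states this theorem as a known result and gives no proof of its own, so there is nothing to compare against; your write-up fills that gap in the expected way.
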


\begin{definition} \label{def:enriched formal context}
An {\em enriched formal context} is a tuple
	$\mathbb{F} = (\mathbb{P}, R_\Box, R_\Diamond)$
	such that $\mathbb{P} = (A, X, I)$ is a formal context, and $R_\Box\subseteq A\times X$ and $R_\Diamond \subseteq X\times A$  are $I$-{\em compatible} relations, that is, \label{def:I-compatible rel}  $R_{\Box}^{(0)}[x]$ (resp.~$R_{\Diamond}^{(0)}[a]$) 
	and $R_{\Box}^{(1)}[a]$ (resp.~$R_{\Diamond}^{(1)}[x]$)  
	are Galois-stable for all $x\in X$ and $a\in A$.
	The {\em complex algebra} of $\mathbb{F}$ is
	\[\mathbb{F}^+ = (\mathbb{P}^+, [R_\Box], \langle R_\Diamond\rangle),\]
	where $\mathbb{P}^+$ is the concept lattice of $\mathbb{P}$, and $[R_\Box]$ and $\langle R_\Diamond\rangle$ are unary operations on $\mathbb{P}^+$ defined as follows: for every $c \in \mathbb{P}^+$,
	\[[R_\Box]c: = (R_{\Box}^{(0)}[\descr{c}], (R_{\Box}^{(0)}[\descr{c}])^{\uparrow}) \quad \mbox{ and }\quad \langle R_\Diamond\rangle c: = ((R_{\Diamond}^{(0)}[\val{c}])^{\downarrow}, R_{\Diamond}^{(0)}[\val{c}]).\]
\end{definition}
 Since $R_{\Box}$ and $ R_{\Diamond}$ are $I$-compatible,  $[R_\Box],\langle R_{\Diamond}\rangle, [R_\Diamond^{-1}],\langle R_{\Box}^{-1}\rangle: \mathbb{P}^+\to \mathbb{P}^+$ are well-defined. 
\begin{lemma} (cf.~\cite[Lemma 3]{KriPalNac18})
	For any  enriched formal context $\mathbb{F} = (\mathbb{P}, R_{\Box}, R_{\Diamond})$, the algebra $\mathbb{F}^+ = (\mathbb{P}^+, [R_{\Box}], \langle R_{\Diamond}\rangle)$ is a complete  lattice expanded with normal modal operators such that $[R_\Box]$ is completely meet-preserving and $\langle R_\Diamond\rangle$ is completely join-preserving.
\end{lemma}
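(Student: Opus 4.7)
The plan is to establish three things from which the statement follows at once: well-definedness of $[R_\Box]$ and $\langle R_\Diamond \rangle$ as operations on the concept lattice $\mathbb{P}^+$, complete meet-preservation of $[R_\Box]$, and complete join-preservation of $\langle R_\Diamond \rangle$. Normality of the operators (preservation of $\top$ and finite meets for the box, of $\bot$ and finite joins for the diamond) follows immediately from the last two, since arbitrary meets/joins include the empty one.

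For well-definedness I would show that the candidate extension $R_\Box^{(0)}[\descr{c}]$ of $[R_\Box]c$ is Galois-stable, and dually that the candidate intension $R_\Diamond^{(0)}[\val{c}]$ of $\langle R_\Diamond\rangle c$ is Galois-stable. Using that $(\cdot)^{(0)}$ turns unions into intersections, one computes $R_\Box^{(0)}[\descr{c}] = \bigcap_{x \in \descr{c}} R_\Box^{(0)}[x]$, which is an intersection of Galois-stable subsets of $A$ by $I$-compatibility, and hence is itself Galois-stable. The case of $\langle R_\Diamond\rangle c$ is symmetric, using the Galois-stability of $R_\Diamond^{(0)}[a]$ for each $a \in \val{c}$.

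The heart of the argument is complete meet-preservation of $[R_\Box]$. For $\mathcal{X} \subseteq \mathbb{P}^+$, since formal concepts are determined by their extensions, it suffices to verify $\val{[R_\Box]\bigwedge \mathcal{X}} = \bigcap_{c \in \mathcal{X}} \val{[R_\Box] c}$. Unfolding via $\descr{\bigwedge \mathcal{X}} = (\bigcup_{c \in \mathcal{X}} \descr{c})^{\downarrow\uparrow}$, the left-hand side becomes $R_\Box^{(0)}\bigl[\bigl(\bigcup_c \descr{c}\bigr)^{\downarrow\uparrow}\bigr]$, whereas the right-hand side rewrites as $\bigcap_c R_\Box^{(0)}[\descr{c}] = R_\Box^{(0)}\bigl[\bigcup_c \descr{c}\bigr]$. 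The entire argument therefore reduces to the key identity
\[ R_\Box^{(0)}[Y] = R_\Box^{(0)}[Y^{\downarrow\uparrow}] \qquad \text{for every } Y \subseteq X. \]
The inclusion $\supseteq$ is antitonicity. For $\subseteq$, fix $a \in R_\Box^{(0)}[Y]$; then $Y \subseteq R_\Box^{(1)}[a]$, so by Galois-stability of $R_\Box^{(1)}[a]$ (the second half of $I$-compatibility) we get $Y^{\downarrow\uparrow} \subseteq R_\Box^{(1)}[a]$, i.e.\ $a \in R_\Box^{(0)}[Y^{\downarrow\uparrow}]$. Complete join-preservation of $\langle R_\Diamond \rangle$ is then proved by the dual argument, exchanging the roles of $A$ and $X$ and of the two Galois maps. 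I expect the only real obstacle to be keeping the variance conventions of $T^{(0)}$ and $T^{(1)}$ straight and invoking the correct half of $I$-compatibility at the key identity (the stability of the $R_\Box^{(1)}[a]$-fiber, not of the $R_\Box^{(0)}[x]$-fiber); the conceptual content is entirely encapsulated in the displayed identity above.
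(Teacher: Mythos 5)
Your proof is correct and follows exactly the standard argument: the paper itself gives no proof, deferring to \cite[Lemma 3]{KriPalNac18}, where the computation is precisely your reduction of complete meet-/join-preservation to the identity $R_\Box^{(0)}[Y]=R_\Box^{(0)}[Y^{\downarrow\uparrow}]$ via the Galois-stability of the fibers $R_\Box^{(1)}[a]$ (and dually for $R_\Diamond$). Your well-definedness step and the observation that normality is the finitary instance are likewise the intended ones; note also that your key identity applied to $Y=\varnothing$ already yields $[R_\Box]\top=\top$, so the empty case is covered.
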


\begin{definition}
\label{def:relational composition}
	For any formal context $\mathbb{P}= (A,X,I)$ and any $I$-compatible relations $R, T\subseteq A\times X$, the {\em composition} $R\, ;T\subseteq A \times X$  is defined as follows: for any $a\in A$ and  $x \in X$,
		\[(R\, ;T)^{(1)} [a] = R^{(1)}[I^{(0)}[T^{(1)} [a]]] ~or~equivalently~ (R\, ;T)^{(0)} [x] = R^{(0)}[I^{(1)}[T^{(0)} [x]]].  \]	
\end{definition} 

\section{Motivation: Kent's Rough Concept Analysis}
Below, we report on the basic definitions and constructions in Rough Concept Analysis \cite{kent1996rough}, cast in the notational conventions of Section \ref{ssec:prelim FCA}.

{\em Rough formal contexts} (abbreviated as {\em Rfc}) are tuples $\mathbb{G} = (\mathbb{P}, E)$ such that $\mathbb{P} = (A, X, I)$ is a polarity (cf.~Section \ref{ssec:prelim FCA}), and $E\subseteq A\times A$ is an equivalence relation (the {\em indiscernibility} relation between objects). For every $a\in A$ we let $(a)_E: = \{b\in A\mid aEb\}$.
The relation $E$ induces two  relations $R, S\subseteq A\times I$  approximating $I$, defined as follows: for every $a\in A$ and $x\in X$,
\begin{equation}\label{eq:lax approx}
aRx \, \mbox{ iff }\, bIx \mbox{ for some } b\in (a)_E;
\quad\quad\quad\quad
aSx \, \mbox{ iff }\, bIx \mbox{ for all } b\in (a)_E.
\end{equation}
By definition,  $R, S$ are $E$-{\em definable} (i.e.~$R^{(0)}[x] = \bigcup_{aRx}(a)_E$ and $S^{(0)}[x]= \bigcup_{aSx}(a)_E$ for any $x\in X$), and $E$ being reflexive immediately implies that
\begin{lemma}
\label{lemma:strict is reflexive, lax is subdelta}
For any Rfc $\mathbb{G} = (\mathbb{P}, E)$, if $R$ and $S$ are defined as in \eqref{eq:lax approx}, then
\begin{equation}
\label{eq:kbs and reflexivity-subdelta}
S\subseteq I \quad\textrm{and}\quad I\subseteq R.\end{equation}
\end{lemma}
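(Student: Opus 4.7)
The plan is to verify both inclusions by unfolding the definitions in \eqref{eq:lax approx} and using only the reflexivity of $E$, which is given as part of the hypothesis that $E$ is an equivalence relation on $A$.

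For the inclusion $S \subseteq I$, I would pick an arbitrary pair $(a,x) \in A \times X$ with $aSx$. By the definition of $S$, this means $bIx$ for every $b \in (a)_E$. Since $E$ is reflexive, $aEa$, so $a \in (a)_E$; specializing the universal statement to $b := a$ gives $aIx$, as required.

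For the inclusion $I \subseteq R$, I would again start from $(a,x) \in A \times X$ with $aIx$. By reflexivity of $E$, we have $a \in (a)_E$, so the witness $b := a$ shows that there exists $b \in (a)_E$ with $bIx$, which is exactly the definition of $aRx$.

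There is really no obstacle here: both directions are immediate consequences of the definitions combined with reflexivity of $E$. The only thing one needs to be slightly careful about is the quantifier structure, namely that $S$ is defined by a universal quantifier over $(a)_E$ (so reflexivity is used to \emph{instantiate}) while $R$ is defined by an existential quantifier (so reflexivity is used to \emph{witness}). The rest of the lemma's hypotheses concerning $E$ (symmetry and transitivity) play no role in this particular statement and would only become relevant later when, for instance, one wishes to derive further properties of $R$ and $S$ such as $E$-definability or idempotence of the induced modal operators.
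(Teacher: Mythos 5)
Your proof is correct and matches the paper's intent exactly: the paper offers no explicit proof, simply noting that the lemma follows immediately from the reflexivity of $E$, and your unfolding of the definitions (using $a\in (a)_E$ to instantiate the universal quantifier for $S$ and to witness the existential quantifier for $R$) is precisely that argument.
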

Intuitively, we can think of $R$ as the {\em lax} version of $I$ determined by $E$, and $S$ as its {\em strict} version determined by $E$.
Following the methodology introduced in \cite{CoPa:non-dist} and applied in \cite{conradie2016categories,TarkPaper} to introduce a polarity-based semantics for the modal logics of formal concepts, under the assumption that $R$ and $S$ are $I$-compatible (cf.~Definition \ref{def:enriched formal context}), the relations $R$ and $S$ can be used to define normal modal operators $[R], \langle R\rangle, [S], \langle S\rangle$ on $\mathbb{P}^+$ defined as follows: for any $c\in \mathbb{P}^+$,
\begin{equation}\label{eq:lax box}
\val{[R]c} : =R^{(0)}[\descr{c}] = \{a\in A\mid \forall x(x\in \descr{c}\Rightarrow aRx)\}
\end{equation}
\begin{equation}\label{eq:strict box}
\val{[S]c}: =S^{(0)}[\descr{c}] = \{a\in A\mid \forall x(x\in \descr{c}\Rightarrow aSx)\}.
 \end{equation}
 That is, the members of $[R]c$  are exactly those objects that satisfy (possibly by proxy of some object equivalent to them) all features in the description of $c$, while the members of $[S]c$  are exactly those objects that not only satisfy all features in the description of $c$, but that `force' all their equivalents to also satisfy them.
The assumption that $S\subseteq I$ implies that $\val{[S] c} = S^{(0)}[\descr{c}] \subseteq  I^{(0)}[\descr{c}]= \val{c}$, hence $[S]c$ is a sub-concept of $c$. The assumption that $I \subseteq R$ implies that $\val{c} = I^{(0)}[\descr{c}]\subseteq R^{(0)}[\descr{c}] = \val{[R] c}$, hence $[R]c$ is a super-concept of $c$.
Moreover, for any $c\in \mathbb{P}^+$,
\begin{equation}\label{eq:lax diamond}
\descr{\langle R\rangle c} : = R^{(1)}[\val{c}] = \{x\in X\mid \forall a(a\in \val{c}\Rightarrow aRx)\}
\end{equation}
\begin{equation}\label{eq:strict diamond}
\descr{\langle S\rangle c} : = S^{(1)}[\val{c}] = \{x\in X\mid \forall a(a\in \val{c}\Rightarrow aSx)\}.
 \end{equation}
 That is, $\langle R\rangle c$ is the concept described by those features  shared not only by  each member of $c$ but also by their equivalents, while $\langle S\rangle c$ is the concept described by the common features of those members of $c$ which `force' each of their equivalents to share them.
The assumption that $I\subseteq R$ implies that $\descr{c} = I^{(1)}[\val{c}]\subseteq R^{(1)}[\val{c}] = \descr{\langle R\rangle c}$, and hence $\langle R\rangle c$ is a sub-concept of $c$.  The assumption that $S\subseteq I$ implies that $\descr{\langle S\rangle c} =  S^{(1)}[\val{c}] \subseteq I^{(1)}[\val{c}] = \descr{c} $, and hence $\langle S\rangle c$ is a super-concept of $c$.
Summing up the discussion above, we have verified that the conditions $I\subseteq R$ and $S\subseteq I$ imply that the following sequents of the modal logic of formal concepts are valid on Kent's basic structures:
\begin{equation}\label{eq:axioms for reflexivity and sub-delta}
\Box_s \phi\vdash \phi\quad  \phi\vdash \Box_{\ell}\phi\quad  \phi\vdash \Diamond_s\phi\quad  \Diamond_{\ell}\phi\vdash \phi,
\end{equation}
where $\Box_s$ is interpreted as $[S]$, $\Box_{\ell}$  as $[R]$, $\Diamond_s$ as $\langle S\rangle$ and $\Diamond_{\ell}$ as $\langle R\rangle$. Translated algebraically, these conditions say that $\Box_s$ and $\Diamond_{\ell}$ are  {\em deflationary}, as {\em interior} operators are, $\Diamond_s$ and $\Box_{\ell}$ are  {\em inflationary}, as {\em closure} operators are. Hence, it is natural to ask under which conditions they (i.e.~their semantic interpretations) are indeed  closure/interior operators. The next definition and lemma provide answers to this question. 
\begin{definition}
An  Rfc $\mathbb{G} = (\mathbb{P}, E)$ is {\em amenable} if $E$, $R$ and $S$ (defined as in \eqref{eq:lax approx}) are $I$-compatible.\footnote{The assumption that $E$ is $I$-compatible does not follow from $R$ and $S$ being $I$-compatible. Let $\mathbb{G} = (\mathbb{P}, Id_A)$ for any polarity $\mathbb{P}$ such that not all singleton sets of objects are Galois-stable. Hence $E = Id_A$ is not $I$-compatible. However, if $E = Id_A$, then $R = S = I$ are $I$-compatible.}
\end{definition}

\begin{lemma}
\label{lemma:strict is transitive, lax is dense}
For any amenable Rfc $\mathbb{G} = (\mathbb{P}, E)$, if  and $R$ and $S$ are defined as in \eqref{eq:lax approx}, then
\begin{equation}
\label{eq:kbs and transitivity-denseness}
R;R\subseteq R \quad \mbox{ and }\quad S\subseteq S; S.\end{equation}
\end{lemma}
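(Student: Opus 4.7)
The inclusion $S\subseteq S;S$ admits a direct argument: after unfolding Definition~\ref{def:relational composition}, the point is that $S^{(0)}[x]$ is automatically $E$-saturated. If $a\in S^{(0)}[x]$, i.e.\ $(a)_E\subseteq I^{(0)}[x]$, then for every $b\in(a)_E$ the transitivity and symmetry of $E$ give $(b)_E=(a)_E\subseteq I^{(0)}[x]$, so $b\in S^{(0)}[x]$. Then for every $y\in Y:=I^{(1)}[S^{(0)}[x]]$ and every $b\in(a)_E$, $bIy$ follows from $b\in S^{(0)}[x]$, whence $(a)_E\subseteq I^{(0)}[y]$, i.e.\ $aSy$. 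This gives $a\in S^{(0)}[Y]=(S;S)^{(0)}[x]$. This part uses only the equivalence-relation structure of $E$, not amenability.

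For $R;R\subseteq R$ I would argue by contradiction: suppose $a\in(R;R)^{(0)}[x]=R^{(0)}[Y]$ with $Y=I^{(1)}[R^{(0)}[x]]$ but $a\notin R^{(0)}[x]$. Since $R^{(0)}[x]$ is $E$-saturated and $(a)_E$ is one $E$-class, $a\notin R^{(0)}[x]$ is equivalent to $(a)_E\cap R^{(0)}[x]=\varnothing$. Setting $B_a:=I^{(0)}[R^{(1)}[a]]$, the plan is to squeeze $B_a$ into the empty set via two inclusions and then use Galois-stability of $R^{(1)}[a]$ to force a contradiction.

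The first inclusion, $B_a\subseteq(a)_E$, uses the $I$-compatibility of $E$. From the evident containment $(a)_E^{\uparrow}=\bigcap_{b\in(a)_E}I^{(1)}[b]\subseteq\bigcup_{b\in(a)_E}I^{(1)}[b]=R^{(1)}[a]$, applying the antitone operator $(\cdot)^{\downarrow}$ and the Galois-stability of $(a)_E$ gives $B_a=R^{(1)}[a]^{\downarrow}\subseteq(a)_E^{\uparrow\downarrow}=(a)_E$. The second inclusion, $B_a\subseteq R^{(0)}[x]$, uses the $I$-compatibility of $R$: the hypothesis $a\in R^{(0)}[Y]$ rewrites as $Y\subseteq R^{(1)}[a]$, and applying $(\cdot)^{\downarrow}$ and Galois-stability of $R^{(0)}[x]$ gives $B_a\subseteq Y^{\downarrow}=I^{(0)}[Y]=R^{(0)}[x]$.

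Combining with the contradiction hypothesis $(a)_E\cap R^{(0)}[x]=\varnothing$ forces $B_a=\varnothing$, and then invoking Galois-stability of $R^{(1)}[a]$ (a third use of amenability) yields $R^{(1)}[a]=B_a^{\uparrow}=\varnothing^{\uparrow}=X$. In particular $aRx$, i.e.\ $a\in R^{(0)}[x]$, contradicting the hypothesis. The main obstacle, and the precise place where amenability is essential, is the first inclusion $B_a\subseteq(a)_E$: without Galois-stability of $(a)_E$ the set $B_a$ can escape $(a)_E$ entirely, and indeed in non-amenable Rfc one can construct examples where $R;R\not\subseteq R$. Thus the three stability conditions bundled into amenability are exactly what is needed to close the argument.
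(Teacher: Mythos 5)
Your proposal is correct and follows essentially the same route as the paper's own proof: a direct verification of $S\subseteq S;S$ using only the $E$-saturation of $S^{(0)}[x]$, and a proof by contradiction for $R;R\subseteq R$ that squeezes $I^{(0)}[R^{(1)}[a]]$ into $(a)_E\cap R^{(0)}[x]=\varnothing$ and then uses Galois-stability of $R^{(1)}[a]$ to force $R^{(1)}[a]=X$. The only (cosmetic) difference is that for the inclusion $I^{(0)}[R^{(1)}[a]]\subseteq(a)_E$ you start from $(a)_E^{\uparrow}\subseteq R^{(1)}[a]$ rather than from $I^{(1)}[a]\subseteq R^{(1)}[a]$, both closing via the $I$-compatibility of $E$.
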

\begin{proof}
Let $x\in X$. To show that $R^{(0)}[I^{(1)}[R^{(0)}[x]]]\subseteq R^{(0)}[x]$, let $a\in R^{(0)}[I^{(1)}[R^{(0)}[x]]]$. By adjunction, this is equivalent to $I^{(1)}[R^{(0)}[x]]\subseteq R^{(1)}[a]$, which implies that $ I^{(0)}[R^{(1)}[a]]\subseteq I^{(0)}[I^{(1)}[R^{(0)}[x]]] = R^{(0)}[x]$, the last equality holding since $R$ is $I$-compatible by assumption. Moreover, $I\subseteq R$ (cf.~Lemma \ref{lemma:strict is reflexive, lax is subdelta}) implies that $I^{(1)}[a]\subseteq R^{(1)}[a]$, which implies that $ I^{(0)}[R^{(1)}[a]]\subseteq I^{(0)}[I^{(1)}[a]] \subseteq (a)_E$, the last inclusion holding since  $E$ is $I$-compatible by assumption.
Hence, $I^{(0)}[R^{(1)}[a]]\subseteq R^{(0)}[x]\cap (a)_E$. Suppose for contradiction that $a\notin R^{(0)}[x]$. By the $E$-definability of $R$, this is equivalent to $R^{(0)}[x]\cap (a)_E = \varnothing$. Hence $I^{(0)}[R^{(1)}[a]] = \varnothing$, from which it follows that $R^{(1)}[a] = I^{(1)}[I^{(0)}[R^{(1)}[a]]] = I^{(1)}[\varnothing] = X$. Hence, $x\in R^{(1)}[a]$, i.e.~$a\in R^{(0)}[x]$,  against the assumption that $a\notin R^{(0)}[x]$.

Let $x\in X$. To show that $S^{(0)}[x]\subseteq S^{(0)}[I^{(1)}[S^{(0)}[x]]]$, assume that $a\in S^{(0)}[x]$. Since $S$ is $E$-definable by construction, this is equivalent to $(a)_E\subseteq S^{(0)}[x]$. To show that $a\in S^{(0)}[I^{(1)}[S^{(0)}[x]]]$, we need to show that $b Iy$ for any $b\in (a)_E$ and  any $y\in I^{(1)}[S^{(0)}[x]]$. Let  $y\in I^{(1)}[S^{(0)}[x]]$. Hence, by definition,  $b'Iy$ for every $b'\in S^{(0)}[x]$. Since $(a)_E\subseteq S^{(0)}[x]$, this implies that $b Iy$ for any $b\in (a)_E$, as required.
\end{proof}
By the general theory developed in \cite{CoPa:non-dist} and applied  to enriched formal contexts in \cite[Proposition 5]{KriPalNac18},  properties \eqref{eq:kbs and transitivity-denseness} guarantee that  the following sequents of the modal logic of formal concepts are also valid on amenable Rfc's:
\begin{equation}\label{eq:axioms for transitivity and denseness}
\Box_s \phi\vdash \Box_s\Box_s \phi\quad  \Box_{\ell}\Box_{\ell}\phi\vdash \Box_{\ell}\phi\quad  \Diamond_s\Diamond_s\phi\vdash \Diamond_s\phi\quad  \Diamond_{\ell}\phi\vdash \Diamond_{\ell}\Diamond_{\ell}\phi.\end{equation}
Finally, again by \cite[Proposition 5]{KriPalNac18}, the fact that by construction $\Box_s$ and $\Diamond_s$ (resp.~$\Box_{\ell}$ and $\Diamond_{\ell}$) are interpreted by operations defined in terms of the same relation guarantees the validity of the following sequents on amenable Rfc's:
\begin{equation}\label{eq:axioms for symmetry}
\phi\vdash  \Box_s\Diamond_s \phi\quad\quad \Diamond_s\Box_s \phi\vdash  \phi\quad\quad \phi\vdash  \Box_{\ell}\Diamond_{\ell} \phi\quad\quad \Diamond_{\ell}\Box_{\ell} \phi\vdash  \phi.\end{equation}
Axioms \eqref{eq:axioms for reflexivity and sub-delta}, \eqref{eq:axioms for transitivity and denseness} and \eqref{eq:axioms for symmetry}   constitute the starting point and motivation for the proof-theoretic investigation of the logics associated to varieties of algebraic structures which can be understood as abstractions   of amenable Rfc's. We define these varieties in the next section.

\section{Kent algebras}
\label{sec:kent algebras}
In the present section, we introduce {\em basic Kent algebras} (and the variety of {\em abstract Kent algebras} (aKa)  to which they naturally belong),   as  algebraic generalizations of amenable Rfc's, and then introduce some subvarieties of aKas in the style of \cite{saha2014algebraic}.
\begin{definition}
\label{def:basic Kent algebras}
A {\em basic Kent algebra} is a structure $\mathbb{A} = (\mathbb{L}, \Box_s, \Diamond_s, \Box_{\ell}, \Diamond_{\ell})$ such that $\mathbb{L}$ is a complete lattice, and $\Box_s, \Diamond_s, \Box_{\ell}, \Diamond_{\ell}$ are unary operations on $\mathbb{L}$ such that for all $a, b\in \mathbb{L}$,
\begin{equation}
\label{eq:adjunction}
\Diamond_s a\leq b\mbox{ iff } a\leq \Box_s b \quad \mbox{ and }\quad \Diamond_{\ell} a\leq b\mbox{ iff } a\leq \Box_{\ell} b,
\end{equation}
 and for any $a\in\mathbb{L}$,
 \begin{equation}\label{eq:reflexive}\Box_s a\leq  a\quad\quad a\leq \Diamond_s a\quad \quad  a\leq \Box_{\ell} a\quad\quad \Diamond_{\ell} a\leq  a
 \end{equation}
  \begin{equation}\label{eq:transitive}\Box_s a\leq  \Box_s\Box_s a\quad \quad \Diamond_s\Diamond_s a\leq \Diamond_s a\quad\quad \Box_{\ell}\Box_{\ell} a\leq  \Box_{\ell} a\quad \quad \Diamond_{\ell} a\leq \Diamond_{\ell}\Diamond_{\ell} a
  \end{equation}
We let $\mathsf{KA}^+$ denote the class of basic Kent algebras.
\end{definition}
From \eqref{eq:adjunction} it follows that, in basic Kent algebras, $\Box_s$ and $\Box_{\ell}$ are completely meet-preserving, $\Diamond_s$ and  $\Diamond_{\ell}$ are completely join-preserving.
For any amenable Rfc $\mathbb{G} = (\mathbb{P}, E)$, if $R$ and $S$ are defined as in \eqref{eq:lax approx}, then
\[\mathbb{G}^+: = (\mathbb{P}^+, [S], \langle S\rangle, [R], \langle R\rangle)\]
where $\mathbb{P}^+$ is the concept lattice of the formal context $\mathbb{P}$ and $[S], \langle S\rangle, [R], \langle R\rangle$ are defined as in \eqref{eq:lax box}--\eqref{eq:strict diamond}. The following proposition is an immediate consequence of  \cite[Proposition 5]{KriPalNac18}, using Lemmas \ref{lemma:strict is reflexive, lax is subdelta} and \ref{lemma:strict is transitive, lax is dense}, and the fact that $[R]$ and $\langle R\rangle$ (resp.~$[S]$ and $\langle S\rangle$) are defined using the same relation.
\begin{proposition}
If   $\mathbb{G} = (\mathbb{P}, E)$ is an amenable Rfc, then  $\mathbb{G}^+$ is a basic Kent algebra.
\end{proposition}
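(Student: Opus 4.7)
The plan is to verify the four groups of conditions of Definition \ref{def:basic Kent algebras} separately, exploiting the discussion that precedes the proposition in the paper, since most of the needed work has already been done.

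\textbf{Step 1: $\mathbb{P}^+$ is a complete lattice.} This is immediate from the construction of the concept lattice and Theorem \ref{thm:Birkhoff}. The amenability assumption guarantees that $R$ and $S$ are $I$-compatible, so by Definition \ref{def:enriched formal context} the operations $[R], \langle R\rangle, [S], \langle S\rangle : \mathbb{P}^+\to\mathbb{P}^+$ are well defined.

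\textbf{Step 2: Adjunction \eqref{eq:adjunction}.} This is precisely the content of \cite[Proposition 5]{KriPalNac18} applied to the pairs $([S],\langle S\rangle)$ and $([R],\langle R\rangle)$. The key point, emphasized in the statement before the proposition, is that in each pair the box and the diamond are defined from the same $I$-compatible relation (once via $T^{(0)}$ and once via $T^{(1)}$), and it is exactly this coincidence of relation that yields the adjunction at the level of the complex algebra. So nothing new needs to be proved here; I would just cite it.

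\textbf{Step 3: The deflationary/inflationary conditions \eqref{eq:reflexive}.} These four inequalities have already been semantically verified in the passage immediately following \eqref{eq:strict diamond}: from Lemma \ref{lemma:strict is reflexive, lax is subdelta} we have $S\subseteq I\subseteq R$, and hence for every concept $c\in\mathbb{P}^+$
\[\val{[S]c} = S^{(0)}[\descr{c}]\subseteq I^{(0)}[\descr{c}]=\val{c}\subseteq R^{(0)}[\descr{c}]=\val{[R]c},\]
which gives $[S]c\leq c\leq [R]c$, and dually, working with $\descr{\cdot}$ and using $S\subseteq I\subseteq R$ on $(\cdot)^{(1)}$,
\[\descr{\langle S\rangle c}\subseteq\descr{c}\subseteq\descr{\langle R\rangle c},\]
which (recalling that the order on $\mathbb{P}^+$ reverses on intensions) gives $c\leq \langle S\rangle c$ and $\langle R\rangle c\leq c$. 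I would just reproduce this short calculation to make the proof self-contained.

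\textbf{Step 4: The transitivity/density conditions \eqref{eq:transitive}.} By Lemma \ref{lemma:strict is transitive, lax is dense}, amenability yields $R;R\subseteq R$ and $S\subseteq S;S$. By \cite[Proposition 5]{KriPalNac18}, the relational inclusion $R;R\subseteq R$ corresponds to $[R][R]c\leq [R]c$ and $\langle R\rangle c\leq \langle R\rangle\langle R\rangle c$ on the complex algebra, and dually $S\subseteq S;S$ corresponds to $[S]c\leq [S][S]c$ and $\langle S\rangle\langle S\rangle c\leq \langle S\rangle c$. These are exactly the four inequalities in \eqref{eq:transitive}. Again, I would simply cite \cite[Proposition 5]{KriPalNac18}.

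The proof is essentially a bookkeeping exercise, so I do not anticipate a real obstacle; the only slightly delicate point is making sure the order conventions on $\mathbb{P}^+$ (where inclusion of extensions is the order, but inclusion of intensions reverses it) are applied correctly when moving from the relational inclusions of Lemmas \ref{lemma:strict is reflexive, lax is subdelta} and \ref{lemma:strict is transitive, lax is dense} to the inequalities on $\mathbb{G}^+$. I would double-check those four sign conventions explicitly rather than hide them behind a blanket citation.
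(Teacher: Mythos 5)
Your proposal is correct and follows exactly the route the paper takes: the paper's own (one-line) proof derives the statement as an immediate consequence of \cite[Proposition 5]{KriPalNac18} together with Lemmas \ref{lemma:strict is reflexive, lax is subdelta} and \ref{lemma:strict is transitive, lax is dense} and the fact that each box/diamond pair is defined from the same relation, which is precisely your Steps 2--4. Your sign conventions in Step 3 (extensions for the boxes, intensions with order reversal for the diamonds) also match the paper's discussion following \eqref{eq:strict diamond}.
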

The natural variety containing basic Kent algebras is defined as follows.
\begin{definition}
\label{def:conceptual tqBa}
An {\em abstract Kent algebra} (aKa) is a structure $\mathbb{A} = (\mathbb{L}, \Box_s, \Diamond_s, \Box_{\ell}, \Diamond_{\ell})$ such that $\mathbb{L}$ is a lattice, and $\Box_s, \Diamond_s, \Box_{\ell}, \Diamond_{\ell}$ are unary operations on $\mathbb{L}$ validating \eqref{eq:adjunction}, \eqref{eq:reflexive} and \eqref{eq:transitive}.
We let $\mathsf{KA}$  
denote the class of abstract Kent algebras. 
\end{definition}
From \eqref{eq:adjunction} it follows that, in aKas, $\Box_s$ and $\Box_{\ell}$ are finitely meet-preserving, $\Diamond_s$ and  $\Diamond_{\ell}$ are finitely join-preserving.

\begin{lemma} 
For any  aKa $\mathbb{A} = (\mathbb{L}, \Box_s, \Diamond_s, \Box_{\ell}, \Diamond_{\ell})$ and every $a\in \mathbb{L}$,
\begin{equation}\label{eq: lower and upper approx}
\Box_s a\vee \Diamond_{\ell} a\leq   \Box_{\ell} a\wedge \Diamond_s a.
\end{equation}
\begin{equation}\label{eq:symmetric} a\leq  \Box_s\Diamond_s a\quad\quad \Diamond_s\Box_s a\leq  a\quad\quad a\leq  \Box_{\ell}\Diamond_{\ell} a\quad\quad \Diamond_{\ell}\Box_{\ell} a\leq  a\end{equation}
\begin{equation}\label{eq:pseudo 5}
\Box_s a\leq \Box_s \Diamond_s a\quad \quad \Diamond_s \Box_s a\leq \Diamond_s a\quad\quad   \Diamond_{\ell} a\leq \Box_{\ell} \Diamond_{\ell} a \quad \quad \Diamond_{\ell} \Box_{\ell} a\leq \Box_{\ell} a.
\end{equation}
\begin{equation}\label{eq:aKa5}
\Diamond_s\Box_s a\leq  \Box_s a\quad \quad \Diamond_s a\leq \Box_s\Diamond_s a\quad\quad \Diamond_{\ell}\Box_{\ell} a\leq  \Box_{\ell} a\quad \quad \Diamond_{\ell} a\leq \Box_{\ell}\Diamond_{\ell} a.
\end{equation}
\end{lemma}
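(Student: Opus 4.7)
All four groups of inequalities follow routinely from the axioms of abstract Kent algebras, using throughout the monotonicity of the four operators (a direct consequence of the adjunction $\eqref{eq:adjunction}$, since $\Diamond_s \dashv \Box_s$ and $\Diamond_{\ell} \dashv \Box_{\ell}$ are residuated pairs). Hence I would invoke monotonicity freely from the outset.

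The first inequality $\Box_s a \vee \Diamond_{\ell} a \leq \Box_{\ell} a \wedge \Diamond_s a$ is immediate by chaining the four parts of $\eqref{eq:reflexive}$ through $a$: both $\Box_s a$ and $\Diamond_{\ell} a$ sit below $a$, while both $\Box_{\ell} a$ and $\Diamond_s a$ sit above it. For the second group $\eqref{eq:symmetric}$, each inequality is a direct instance of the unit/counit of an adjunction: applying $\eqref{eq:adjunction}$ to $a \leq \Box_s \Diamond_s a$ yields the tautology $\Diamond_s a \leq \Diamond_s a$, and the remaining three cases are structurally identical.

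For the third group $\eqref{eq:pseudo 5}$, I combine monotonicity with $\eqref{eq:reflexive}$: $\Box_s a \leq \Box_s \Diamond_s a$ since $a \leq \Diamond_s a$, and $\Diamond_s \Box_s a \leq \Diamond_s a$ since $\Box_s a \leq a$. The two $\ell$-versions are obtained by chaining $\eqref{eq:reflexive}$ through the previously established symmetric inequalities, e.g.\ $\Diamond_{\ell} a \leq a \leq \Box_{\ell} \Diamond_{\ell} a$ and dually $\Diamond_{\ell} \Box_{\ell} a \leq a \leq \Box_{\ell} a$. For the fourth group $\eqref{eq:aKa5}$, the cleanest approach is first to observe that each of $\Box_s, \Diamond_s, \Box_{\ell}, \Diamond_{\ell}$ is idempotent: monotonicity together with $\eqref{eq:reflexive}$ gives the converses of the four inequalities in $\eqref{eq:transitive}$ (for instance $\Box_s \Box_s a \leq \Box_s a$ is an instance of $\Box_s b \leq b$ with $b := \Box_s a$). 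Each inequality in $\eqref{eq:aKa5}$ then follows from a one-line adjunction argument applied to $\eqref{eq:transitive}$: e.g.\ $\Diamond_s \Box_s a \leq \Box_s a$ is equivalent, under $\eqref{eq:adjunction}$, to $\Box_s a \leq \Box_s \Box_s a$; and $\Diamond_s a \leq \Box_s \Diamond_s a$ is equivalent to $\Diamond_s \Diamond_s a \leq \Diamond_s a$. The two $\ell$-versions are obtained analogously.

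There is no genuine obstacle in this proof; the only point of care is tracking correctly which operator of each adjoint pair is the left adjoint and which is the right, and noting that the $\ell$-operators exhibit opposite inflationary/deflationary behaviour compared with the $s$-operators. Once this bookkeeping is in place, each of the twelve inequalities is discharged by a single application of one of $\eqref{eq:adjunction}$, $\eqref{eq:reflexive}$, or $\eqref{eq:transitive}$, together with monotonicity.
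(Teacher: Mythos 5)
Your proof is correct and follows essentially the same route as the paper's (very terse) argument: \eqref{eq:symmetric} as unit/counit of the adjunctions \eqref{eq:adjunction}, the first and third groups by chaining \eqref{eq:reflexive} and \eqref{eq:symmetric} (or, equivalently, by monotonicity), and \eqref{eq:aKa5} by transposing \eqref{eq:transitive} across the adjunctions. Your extra step of first deriving the converses of \eqref{eq:transitive} from \eqref{eq:reflexive} is in fact needed for the two $\ell$-inequalities of \eqref{eq:aKa5}, whose adjoint transposes are those converses rather than the axioms themselves, so your bookkeeping there is slightly more careful than the paper's one-line justification.
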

\begin{proof}
The inequalities in \eqref{eq:symmetric}  are  straightforward consequences of \eqref{eq:adjunction}.
The inequalities in \eqref{eq: lower and upper approx} and \eqref{eq:pseudo 5} follow from \eqref{eq:reflexive} and \eqref{eq:symmetric}, using the transitivity of the order.
The inequalities in \eqref{eq:aKa5} follow from those in \eqref{eq:transitive} using \eqref{eq:adjunction}.
\end{proof}
Conditions \eqref{eq:aKa5} define the`Kent algebra' counterparts of topological quasi Boolean algebras 5 (tqBa5) \cite{saha2014algebraic}. In the next definition, we introduce `Kent algebra' counterparts of some other varieties considered in \cite{saha2014algebraic}, 
and also varieties characterized by interaction axioms  between lax and strict connectives which follow the pattern of the 5-axioms in rough algebras.
\begin{definition}
\label{def:aKa5 etc}
An aKa $\mathbb{A}$ as above is an {\em aKa5'} if for any $a\in\mathbb{L}$,

\begin{equation}\label{eq:aKafive}
\Diamond_{\ell} a \leq \Box_s \Diamond_{\ell} a \quad\quad \Diamond_s \Box_{\ell} a \leq \Box_{\ell} a \quad\quad \Box_s a \leq \Diamond_{\ell} \Box_s a \quad\quad \Box_{\ell} \Diamond_s a \leq \Diamond_s a;
\end{equation}
 is  a {\em K-IA3$_s$}  if for any $a, b\in\mathbb{L}$,
\begin{equation}\label{eq:K-IA3 strict}
\Box_s a\leq  \Box_s b \mbox{ and } \Diamond_s a\leq \Diamond_s  b\mbox{ imply } a\leq b,
\end{equation}
and is  a {\em K-IA3$_{\ell}$}  if for any $a, b\in\mathbb{L}$,
\begin{equation}\label{eq:K-IA3 lax}
\Box_{\ell} a\leq  \Box_{\ell} b \mbox{ and } \Diamond_{\ell} a\leq \Diamond_{\ell}  b\mbox{ imply } a\leq b.
\end{equation}
\end{definition}

Interestingly, the third and fourth inequality in \eqref{eq:aKafive} are not analytic inductive (cf.~\cite[Definition 55]{greco2016unified}); however, they are equivalent to analytic inductive inequalities in the multi-type language of the heterogeneous algebras discussed in the next section.

\section{Multi-type presentation of Kent algebras}
\label{sec:multi-type presentation kent}
Similarly to what holds for rough algebras (cf.~\cite[Section 3]{GLMP18}), since the modal operations of any aKa $\mathbb{A} = (\mathbb{L}, \Box_s, \Diamond_s, \Box_{\ell}, \Diamond_{\ell})$ are either interior operators or closure operators, each of them factorizes  into a pair of adjoint normal modal operators which are retractions or co-retractions, as illustrated in the following table:
\begin{center}
\begin{tabular}{|c|c|c|c|}
\hline
    \multicolumn{1}{|c}{$\Box_s = \wcirci\cdot \bboxi$} & \multicolumn{1}{c|}{$\bboxi\cdot \wcirci = id_{\mathrm{S}_I}$} & \multicolumn{1}{|c}{$\Diamond_s = \wcircc\cdot\bdiac$} & \multicolumn{1}{c|}{$\bdiac\cdot \wcircc = id_{\mathrm{S}_C}$} \\
    \hline
    \ \ \ $\wcirci: \mathrm{S}_I\hookrightarrow \mathbb{L}$ \ \ \ & \multicolumn{1}{c|}{$\bboxi: \mathbb{L}\twoheadrightarrow \mathrm{S}_I$} & \multicolumn{1}{|c|}{\ \ \ $\bdiac: \mathbb{L}\twoheadrightarrow \mathrm{S}_C$\ \ \ } & $\wcircc: \mathrm{S}_C\hookrightarrow \mathbb{L}$\\
    \hline \hline
     \multicolumn{1}{|c}{$\Box_{\ell} = \wboxc\cdot \bcircc$} & \multicolumn{1}{c|}{$\bcircc\cdot \wboxc = id_{\mathrm{L}_C}$} & \multicolumn{1}{|c}{$\Diamond_{\ell} = \wdiai \cdot \bcirci$} & \multicolumn{1}{c|}{$\bcirci\cdot \wdiai  = id_{\mathrm{L}_I}$} \\
     \hline
        $\bcircc: \mathbb{L}\twoheadrightarrow \mathrm{L}_C$ &$\wboxc: \mathrm{L}_C\hookrightarrow \mathbb{L}$ & \multicolumn{1}{|c|}{$\wdiai: \mathrm{L}_I\hookrightarrow \mathbb{L}$} &  $\bcirci: \mathbb{L}\twoheadrightarrow \mathrm{L}_I$  \\
    \hline
\end{tabular}
\end{center}
where  $\mathrm{S}_I:= \Box_s[\mathbb{L}]$, $\mathrm{S}_C:= \Diamond_s[\mathbb{L}]$, $\mathrm{L}_C:= \Box_{\ell}[\mathbb{L}]$, and $\mathrm{L}_I:= \Diamond_s[\mathbb{L}]$, and such that for all $\alpha\in \mathrm{S}_I$, $\delta\in \mathrm{S}_C$, $a\in \mathbb{L}$, $\pi\in \mathrm{L}_I$, $\sigma\in \mathrm{L}_C$,
\begin{equation}
\label{eq: multi-type retraction co-retr strict}
\wcirci\alpha\leq a \mbox{ iff } \alpha\leq \bboxi a\quad \bdiac a\leq \delta\mbox{ iff } a\leq \wcircc\delta \quad \bcircc a\leq \pi \mbox{ iff } a\leq \wboxc \pi\quad \wdiai\sigma \leq a\mbox{ iff } \sigma \leq \bcircc a.
\end{equation}
Again similarly to what observed in \cite{GLMP18}, the lattice structure of $\mathbb{L}$ can be exported to each of the sets $\mathrm{S}_I, \mathrm{S}_C, \mathrm{L}_C$ and $\mathrm{L}_I$ via the corresponding pair of modal operators as follows.
\begin{definition}
\label{def:kernel}
For any aKa $\mathbb{A}$, the {\em strict interior kernel} $\mathsf{S_I}= (\mathrm{S}_I, \cup_I, \cap_I, \topi, \boti)$  and the {\em  strict closure kernel} $\mathsf{S_C} = (\mathrm{S}_C, \cup_C, \cap_C,  \topc, \botc)$ are such that, for all $\alpha, \beta\in S_I$, and all $\delta, \gamma\in S_C$,
\begin{center}
\begin{tabular}{lcl}
$\alpha \cup_I \beta: = \bboxi(\wcirci \alpha \lor \wcirci\beta)$ & $\quad\quad\quad$& $\delta \cup_C \gamma: = \bdiac(\wcircc \delta \lor \wcircc\gamma)$\\
$\alpha \cap_I \beta: = \bboxi(\wcirci \alpha \land \wcirci\beta)$ & & $\delta \cap_C \gamma: = \bdiac(\wcircc \delta \land \wcircc\gamma)$\\
$\topi:  = \bboxi\top,\ \boti: = \bboxi\bot$&& $\topc:  = \bdiac\top,\ \botc = \bdiac\bot$\\
\end{tabular}
\end{center}
The {\em  lax interior kernel} $\mathsf{L_I}= (\mathrm{L}_I, \sqcup_I, \sqcap_I,  1_\mathbb{I}, 0_\mathbb{I})$  and the {\em lax closure kernel} $\mathsf{L_C} = (\mathrm{L}_C, \sqcup_C, \sqcap_C,  1_\mathbb{C}, 0_\mathbb{C})$ are such that, for all $\pi, \xi\in L_I$, and all $\sigma, \tau\in L_C$,
\begin{center}
\begin{tabular}{lcl}
$\pi \sqcup_I \xi: = \bcirci(\wdiai \pi \lor \wdiai\xi)$ &$\quad\quad\quad$& $\sigma \sqcup_C \tau: = \bcircc(\wboxc \sigma \lor \wboxc\tau)$\\
$\pi \sqcap_I \xi: = \bcirci(\wdiai \pi \land \wdiai\xi)$ && $\sigma \sqcap_C \tau: = \bcircc(\wboxc \sigma \land \wboxc\tau)$\\
$1_I:  = \bcirci\top,\ 0_I: = \bcirci\bot$ && $1_C:  = \bcircc\top,\ 0_C = \bcircc\bot$\\
\end{tabular}
\end{center}
\end{definition}
Similarly to what observed in \cite{GLMP18}, it is easy to verify that the algebras defined above are lattices,
 and the operations indicated with a circle (either black or white) are lattice homomorphisms (i.e.~are both normal box-type and normal diamond-type operators). The construction above justifies the following definition of  class of heterogeneous algebras 
equivalent to aKas:
\begin{definition}
\label{def: heterogeneous algebras}
A {\em heterogeneous aKa} (haKa)  is a tuple \begin{center}$\mathbb{H} = (\mathbb{L}, \mathsf{S_I},\mathsf{S_C}, \mathsf{L_I},\mathsf{L_C}, \wcirci, \bboxi, \wcircc, \bdiac, \bcirci, \wdiai, \bcircc, \wboxc)$\end{center} such that:
\begin{itemize}
\item[H1] $\mathbb{L}, \mathsf{S_I},\mathsf{S_C}, \mathsf{L_I},\mathsf{L_C}$ are bounded lattices;

\item[H2] $\wcirci: \mathsf{S_I}\hookrightarrow \mathbb{L}$, $\wcircc: \mathsf{S_C}\hookrightarrow \mathbb{L}$, $\bcirci: \mathbb{L}\twoheadrightarrow \mathsf{L_I}$, $\bcircc: \mathbb{L}\twoheadrightarrow \mathsf{L_C}$  are  lattice homomorphisms;
\item[H3] $\wcirci \dashv \bboxi$ \quad\quad $\bdiac \dashv \wcircc$ \quad \quad $\bcircc \dashv \wboxc$ \quad\quad $\wdiai \dashv \bcirci$;
\item[H4]    $\bboxi\wcirci = id_{\mathsf{S_I}} \quad\quad \bdiac\wcircc = id_{\mathsf{S_C}} \quad\quad \bcircc\wboxc = id_{\mathsf{L_C}}\quad\quad \bcirci\wdiai = id_{\mathsf{L_I}}$\footnote{Condition H3 implies that $\bboxi: \mathbb{L}\twoheadrightarrow \mathsf{S_I}$ and $\wboxi: \mathsf{L_I} \hookrightarrow \mathbb{L}$ are $\wedge$-hemimorphisms and $\bdiac: \mathbb{L}\twoheadrightarrow \mathsf{S_C}$ and $\wdiac: \mathsf{L_C} \hookrightarrow \mathbb{L}$ are $\vee$-hemimorphisms; condition  H4 implies that the black connectives are surjective and the white ones are injective.}
\end{itemize}
The haKas corresponding to the varieties of Definition \ref{def:aKa5 etc} are defined as follows: 
{\fns
\begin{center}
\begin{tabular}{|c|c|c|}
\hline
Algebra & Acronym & Conditions\\
\hline
{\em heterogeneous aKa5'} & haKa5' & $\wdiai\pi\leq \wcirci\bboxi\wdiai\pi$ $\quad\wcircc\bdiac\wboxc\sigma\leq \wboxc\sigma$\\
&& $\wcirci\alpha\leq \wdiai\bcirci\wcirci\alpha$ $\quad\wboxc\bcircc\wcircc\delta\leq \wcircc\delta$\\
 \hline
 {\em heterogeneous K-IA3$_s$}  &hK-IA3$_s$ & $\bboxi a \leq \bboxi b$ and $ \bdiac a \leq  \bdiac b$ imply $a \leq b$\\
  \hline
 {\em heterogeneous K-IA3$_\ell$}  &hK-IA3$_\ell$ & $\wboxc \bcircc a \leq \wboxc \bcircc b$ and $ \wdiai \bcirci a \leq  \wdiai \bcirci b$ imply $a \leq b$\\
 \hline
 \end{tabular}
 \end{center}
 }
 Notice that the inequalities defining  haKa5' are all analytic inductive.
A heterogeneous algebra $\mathbb{H}$ is {\em perfect} if every  lattice  in the signature of $\mathbb{H}$ is  perfect (cf.~\cite[Definition 1.8]{CoPa:non-dist}), and every homomorphism (resp.~hemimorphism) in the signature of $\mathbb{H}$ is a complete homomorphism (resp.~hemimorphism).
\end{definition}
Similarly to what discussed in \cite[Section 3]{GLMP18}, one can readily show that the classes of haKas defined above correspond to the varieties defined in Section \ref{sec:kent algebras}. That is, for any aKa $\mathbb{A}$ one can define its corresponding haKa $\mathbb{A}^+$ using the factorizations described at the beginning of the present section and Definition \ref{def:kernel}, and conversely, given a haKa $\mathbb{H}$, one can define its corresponding aKa $\mathbb{H}_+$ by endowing its first domain $\mathbb{L}$ with modal operations defined by taking the appropriate compositions of pairs of heterogeneous maps of $\mathbb{H}$. Then, for every $\mathbb{K}\in \{$aKa, aKa5', K-IA3$_s$, K-IA3$_\ell\}$, letting $\mathbb{HK}$ denote its corresponding class of heterogeneous algebras, the following holds:
\begin{proposition}
\label{prop:from single to multi}
\begin{enumerate}
\item If $\mathbb{A} \in \mathbb{K}$, then    $\mathbb{A}^+\in \mathbb{HK}$;
\item If $\mathbb{H}\in \mathbb{HK}$, then $\mathbb{H}_+\in \mathbb{K}$;
\item $ \mathbb{A} \cong (\mathbb{A}^+)_+ \quad \mbox{and}\quad \mathbb{H} \cong (\mathbb{H}_+)^+.$
\item The isomorphisms of the previous item restrict to perfect members of $\mathbb{K}$ and $\mathbb{HK}$.
\item If $\mathbb{A} \in \mathbb{K}$, then $\mathbb{A}^\delta\cong ((\mathbb{A}^+)^\delta)_+$ and if $\mathbb{H} \in \mathbb{HK}$, then $\mathbb{H}^\delta\cong ((\mathbb{H}_+)^\delta)^+$.
\end{enumerate}
\end{proposition}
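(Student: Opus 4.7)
The proof follows the blueprint of \cite{GLMP18}, Section 3, adapted to the nondistributive setting; in fact the lattice-based nature of $\mathbb{L}$ plays no role in the arguments of \cite{GLMP18} for analogous results about rough algebras, so most of the work transfers almost verbatim. The plan is to address the five items in order, making repeated use of the factorization of the modal operators displayed at the beginning of Section \ref{sec:multi-type presentation kent}.

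For item (1), given an aKa $\mathbb{A}$, define $\mathbb{A}^+$ by taking as first carrier $\mathbb{L}$ and as four remaining carriers the kernels of Definition \ref{def:kernel}, together with the six heterogeneous maps provided by the factorizations. Axiom H1 is immediate since each kernel has its operations and bounds defined via the associated black/white maps; axiom H2 reduces to showing that, e.g., \wcirci preserves meets (as a right adjoint) and joins (using the definition $\alpha\cup_I\beta:= \bboxi(\wcirci\alpha\vee\wcirci\beta)$ together with the retraction identity $\bboxi\wcirci=id_{\mathsf{S_I}}$, which also gives $\wcirci(\alpha\cup_I\beta)=\wcirci\alpha\vee\wcirci\beta$ because $\wcirci[\mathrm{S_I}]$ equals the image of $\Box_s$, on which $\wcirci\bboxi$ acts as identity by the retraction). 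Axioms H3 and H4 follow respectively from \eqref{eq: multi-type retraction co-retr strict} and from the fact that $\Box_s,\Diamond_{\ell}$ (resp.\ $\Diamond_s,\Box_{\ell}$) are interior (resp.\ closure) operators by virtue of \eqref{eq:adjunction}--\eqref{eq:transitive}. For each subvariety, rewrite the defining axioms using the factorizations (e.g.\ $\Diamond_{\ell}a=\wdiai\bcirci a$ and $\Box_s\Diamond_{\ell}a=\wcirci\bboxi\wdiai\bcirci a$): after applying H4 to simplify compositions of the form $\bcirci\wdiai$, each of \eqref{eq:aKafive}, \eqref{eq:K-IA3 strict}, \eqref{eq:K-IA3 lax} becomes equivalent to the corresponding heterogeneous inequality in the table.

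For item (2), given $\mathbb{H}\in\mathbb{HK}$, define $\mathbb{H}_+$ by setting $\Box_s:=\wcirci\cdot\bboxi$, $\Diamond_s:=\wcircc\cdot\bdiac$, $\Box_{\ell}:=\wboxc\cdot\bcircc$, $\Diamond_{\ell}:=\wdiai\cdot\bcirci$. Then \eqref{eq:adjunction} follows from composing the adjunctions of H3; the inequalities in \eqref{eq:reflexive} follow from H4 and the unit/counit inequalities of H3 (e.g.\ $\Box_s a=\wcirci\bboxi a\leq a$ since $\wcirci\dashv\bboxi$); the inequalities in \eqref{eq:transitive} follow, in a stronger form, from H4 alone: for instance $\Box_s\Box_s a=\wcirci\bboxi\wcirci\bboxi a=\wcirci\bboxi a=\Box_s a$. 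The subvariety axioms translate back in the same fashion as in item (1). For item (3), the isomorphism $\mathbb{A}\cong(\mathbb{A}^+)_+$ is the identity on $\mathbb{L}$, since the modal operations of $(\mathbb{A}^+)_+$ are precisely the compositions $\wcirci\cdot\bboxi=\Box_s$ etc.\ already present in $\mathbb{A}$; the isomorphism $\mathbb{H}\cong(\mathbb{H}_+)^+$ is identity on $\mathbb{L}$ together with the natural identifications between the kernels $\Box_s[\mathbb{L}],\Diamond_s[\mathbb{L}],\Box_{\ell}[\mathbb{L}],\Diamond_{\ell}[\mathbb{L}]$ and the carriers $\mathsf{S_I},\mathsf{S_C},\mathsf{L_C},\mathsf{L_I}$, which coincide via H4.

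For item (4), perfectness transfers because kernels of complete homomorphisms between perfect lattices are perfect, and the adjunctions of H3 preserve completeness of the maps when completeness is assumed on one side; conversely the compositions defining the modal operators of $\mathbb{H}_+$ are complete whenever the heterogeneous maps are. Item (5) then follows by combining items (1)--(4) with the standard fact that the canonical extension of a lattice expansion by adjoint pairs of normal modal operators commutes with taking the subalgebras/quotients cut out by the corresponding interior/closure operators; this is essentially a routine application of the $\sigma$- and $\pi$-extensions of adjoint pairs to the multi-type signature, and is entirely analogous to the argument in \cite{GLMP18}. The main obstacle is the verification in item (1) that the third and fourth inequalities of \eqref{eq:aKafive}, which are not analytic inductive in the single-type signature, become equivalent to the analytic inductive inequalities $\wcirci\alpha\leq\wdiai\bcirci\wcirci\alpha$ and $\wboxc\bcircc\wcircc\delta\leq\wcircc\delta$ in the heterogeneous signature: this requires carefully unfolding factorizations and repeatedly applying H4 to collapse compositions of the form $\bboxi\wcirci$ and $\bdiac\wcircc$ to identities.
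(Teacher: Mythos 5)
Your overall strategy (construct $\mathbb{A}^+$ from the factorizations and the kernels of Definition~\ref{def:kernel}, construct $\mathbb{H}_+$ by composing the heterogeneous maps, and check the axioms on both sides) is exactly the one the paper intends, and most of your verifications go through. But there is a genuine gap in item (2), at the very first step: the claim that \eqref{eq:adjunction} for $\mathbb{H}_+$ ``follows from composing the adjunctions of H3'' is not correct. The two adjunctions $\wcirci \dashv \bboxi$ and $\bdiac \dashv \wcircc$ each live entirely within one kernel ($\mathsf{S_I}$ and $\mathsf{S_C}$ respectively), so composing them only tells you that $\Box_s:=\wcirci\bboxi$ is an interior operator and $\Diamond_s:=\wcircc\bdiac$ is a closure operator; it does not yield $\Diamond_s \dashv \Box_s$. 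Indeed, the right adjoint of $\wcircc\bdiac$ (when it exists) is $\wcircc$ composed with the right adjoint of $\wcircc$, and identifying this with $\wcirci\bboxi$ requires a condition linking the two strict kernels which is not among H1--H4. A concrete failure: take $\mathsf{S_I}=\mathbb{L}$ with $\wcirci=\bboxi=id$, and $\mathsf{S_C}$ the two-element lattice with $\wcircc$ the inclusion of the bounds and $\bdiac$ its left adjoint; all of H1--H4 hold, yet $\wcircc\bdiac a \leq b$ fails to be equivalent to $a\leq \wcirci\bboxi b$ for any $\bot < a=b < \top$. The missing ingredient is precisely the inter-kernel correspondence that the calculus encodes as the structural rules $\WCIRC\BDIA$ and $\WBOX\BCIRC$ (semantically, $\wcirci$ composed with its own left adjoint must coincide with $\wcircc\bdiac$, and dually for the lax kernels); your proof must either invoke such conditions explicitly as part of the definition of $\mathbb{HK}$ or supply them, since without them item (2) is simply false for arbitrary tuples satisfying H1--H4.

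Two smaller points. First, in item (1) you have the adjunction the wrong way around: by H3, $\wcirci$ is a \emph{left} adjoint, so it preserves joins automatically; the part that needs the kernel structure (idempotence and meet-preservation of $\Box_s$, so that $\wcirci\bboxi(\wcirci\alpha\wedge\wcirci\beta)=\wcirci\alpha\wedge\wcirci\beta$) is the preservation of \emph{meets}, i.e.\ you have swapped which preservation is free and which requires work. Second, your item (1) mentions verifying the translation of \eqref{eq:K-IA3 strict}, but the table in Definition~\ref{def: heterogeneous algebras} and the calculi only treat $K\text{-}IA3_\ell$ among the quasi-inequational classes relevant here; this is harmless but worth aligning with the classes actually quantified over in the proposition.
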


\section{Multi-type calculi for the logics of Kent algebras}
\label{ssec:Display calculus}
In the present section, we introduce the  multi-type calculi associated with each class of algebras $\mathsf{K}\in \{aKa, aKa5', K\text{-}IA3_{\ell}\}$. The language of these logics matches the language of haKas, and  is built up from structural and operational (i.e.~logical) connectives.  Each structural connective is denoted by decorating its corresponding logical connective  with $\hat{\phantom{a}}$ (resp.~$\check{\phantom{a}}$ or $\tilde{\phantom{a}}$).  
In what follows, we will adopt the convention that unary connectives bind more strongly than binary ones.

\begin{center}
{\fns
\begin{tabular}{ll}

\mc{2}{c}{general lattice $\mathsf{L}$} \\
\mc{2}{c}{
\!\!\!\!\!\!\!\,$A  ::= \,p \mid \xtop \mid \xbot \mid \wcirci \alpha \mid \wcircc \delta \mid \wdiai \pi \mid \wboxc \sigma \mid A \land A \mid A \lor A$
}
 \\
\mc{2}{c}{
$X ::= \,A \mid  \XBOT \mid \XTOP \mid \WCIRCI \Gamma \mid \WCIRCC \Delta \mid \WDIAI \Pi \mid \WBOXI \Pi \mid \WDIAC \Sigma \mid \WBOXC \Sigma \mid X \XAND X \mid X \XOR X$
}
 \\

\mc{2}{c}{\ } \\

strict-interior kernel \ $\mathsf{S_I}$ \ & \ lax-interior kernel \ $\mathsf{L_I}$ \\
$\alpha ::= \,\bdiai A \mid \bboxi A$ \ & \ $\pi ::= \,\bcirci A$ \\

$\Gamma ::= \alpha \mid \BDIAI X \mid \BBOXI X \mid \BOTI \mid \TOPI \mid \Gamma \ANDI \Gamma \mid \Gamma \ORI \Gamma$ \ & \ $\Pi ::= \pi \mid \BCIRCI X \mid \BOTLI \mid \TOPLI \mid \Pi \ANDLI \Pi \mid \Pi \ORLI \Pi$ \\

 & \\

strict-closure kernel \ $\mathsf{S_C}$ \ &\ lax-closure kernel \ $\mathsf{L_C}$ \\
$\delta ::= \,\bdiac A \mid \bboxc A$ \ & \ $\sigma ::= \,\bcircc A$ \\

$\Delta ::=\, \delta \mid \BDIAC X \mid \BBOXC X \mid \BOTC \mid \TOPC \mid \Delta \ANDC \Delta \mid \Delta \ORC \Delta$ \ & \ $\Sigma ::=\, \sigma \mid \BCIRCC X \mid \BOTLC \mid \TOPLC \mid \Sigma \ANDLC \Sigma \mid \Sigma \ORLC \Sigma$ \\

\end{tabular}
 }
\end{center}


\begin{itemize}
\item Interpretation of structural connectives as their logical counterparts\footnote{ The connectives  which  appear in a grey cell in the synoptic tables will   only be included in the present language at the structural level.}
\end{itemize}

\begin{enumerate}
\item structural and operational pure $\mathsf{L}$-type connectives:
{
\begin{center}
\begin{tabular}{|r|c|c|c|c|c|}
\hline
\ structural operations \ &$\XTOP$ &$\XBOT$  & $\XAND$& $\XOR$ \\
\hline
\ logical operations \ &$\xtop$ & $\xbot$ & $\xand$ & $\xor$ \\
\hline
\end{tabular}
\end{center}
}
\item structural and operational pure $\mathsf{S_I}$-type and $\mathsf{S_C}$-type connectives:
{
\begin{center}
\begin{tabular}{|r|c|c|c|c|c|c|c|c|c|c|}
\hline
\ structural operations \ &\rule[0mm]{0mm}{0.32cm}$\TOPI$ &$\BOTI$ & $\ANDI$& $\ORI$ & $\TOPC$ & $\BOTC$ & $\ANDC$ & $\ORC$ \\
\hline
\ logical operations \ &\cellcolor{gray!25}$\topi$ & \cellcolor{gray!25}$\boti$ & \cellcolor{gray!25}$\andi$ & \cellcolor{gray!25}$\ori$ & \cellcolor{gray!25}$\topc$ & \cellcolor{gray!25}$\botc$ & \cellcolor{gray!25}$\orc$ & \cellcolor{gray!25}$\andc$ \\
\hline
\end{tabular}
\end{center}
}
\item structural and operational pure $\mathsf{L_I}$-type and $\mathsf{L_C}$-type connectives:
{
\begin{center}
\begin{tabular}{|r|c|c|c|c|c|c|c|c|c|c|}
\hline
\ structural operations \ &\rule[0mm]{0mm}{0.35cm}$\TOPLI$ &$\BOTLI$ & $\ANDLI$& $\ORLI$ & $\TOPLC$ & $\BOTLC$ & $\ANDLC$ & $\ORLC$ \\
\hline
\ logical operations \ &\cellcolor{gray!25}$\topli$ & \cellcolor{gray!25}$\botli$ & \cellcolor{gray!25}$\andli$ & \cellcolor{gray!25}$\orli$ & \cellcolor{gray!25}$\toplc$ & \cellcolor{gray!25}$\botlc$ & \cellcolor{gray!25}$\orlc$ & \cellcolor{gray!25}$\andlc$ \\
\hline
\end{tabular}
\end{center}
}
\item structural and operational multi-type strict connectives:
{
\begin{center}
\begin{tabular}{|r|c|c|c|c|c|c|c|c|c|}
\hline
\ types \ & \mc{2}{c|}{$\mathsf{L} \rightarrow \mathsf{S_I}$} &\mc{2}{c|}{$\mathsf{L} \rightarrow \mathsf{S_C}$} &$\mathsf{S_I} \rightarrow \mathsf{L}$&$\mathsf{S_C} \rightarrow \mathsf{L}$ \\
\hline
\ structural operations \ &\rule[0mm]{0mm}{0.32cm}$\BDIAI$& $\BBOXI$ &$\BDIAC$ &\BBOXC &$\WCIRCI$& $\WCIRCC$\\
\hline
\ logical operations \ &\cellcolor{gray!25}$\bdiai$&$\bboxi$ & $\bdiac$ &\cellcolor{gray!25}$\bboxc$& $\wcirci$ & $\wcircc$\\
\hline
\end{tabular}
\end{center}
}
\item structural and operational multi-type lax connectives:
{
\begin{center}
\begin{tabular}{|r|c|c|c|c|c|c|c|c|c|}
\hline
\ types \ & \mc{2}{c|}{$\mathsf{L_I} \rightarrow \mathsf{L}$} &\mc{2}{c|}{$\mathsf{L_C} \rightarrow \mathsf{L}$} &$\mathsf{L} \rightarrow \mathsf{L_I}$&$\mathsf{L} \rightarrow \mathsf{L_C}$ \\
\hline
\ structural operations \ &\rule[0mm]{0mm}{0.32cm}$\WDIAI$& $\WBOXI$ &$\WDIAC$ &\WBOXC &$\BCIRCI$& $\BCIRCC$\\
\hline
\ logical operations \ &$\wdiai$&\cellcolor{gray!25}$\wboxi$ & \cellcolor{gray!25}$\wdiac$ &$\wboxc$& $\bcirci$ & $\bcircc$\\
\hline
\end{tabular}
\end{center}
}
\end{enumerate}



In what follows, we will use $x, y, z$ as structural variables of arbitrary types, $a, b, c$ as term variables of arbitrary types.

The calculus $\mathrm{D.AKA}$ consists of the following axiom and rules.
\begin{itemize}
\item Identity and Cut:
{\fns
\begin{center}
\begin{tabular}{cc}
\AXC{$\phantom{x \fCenter y \,[a]}$}
\LL{\scriptsize $Id_\mathsf{L}$}
\UI $p \fCenter p$
\DP
\ \ & \ \
\AX$x \fCenter a$
\AX$a \fCenter y$
\RL{\scriptsize Cut}
\BI$x \fCenter y$
\DP
 \\
\end{tabular}
\end{center}
}
\item Multi-type display rules (we omit the display rules capturing the adjunctions \mbox{$\wdiai \dashv \bcirci \dashv \wboxi$} and \mbox{$\wdiai \dashv \bcirci \dashv \wboxi$}):
{\fns
\begin{center}
\begin{tabular}{cccc}
\AX$\WCIRCI \Gamma  \fCenter X$
\LL{\scriptsize $ad_\mathsf{LS_I}$}
\doubleLine
\UI$\Gamma \fCenter \BBOXI X$
\DP
\ & \
\AX$X \rule[0mm]{0mm}{0.26cm}\fCenter \WCIRCI \Gamma$
\RL{\scriptsize $ad_\mathsf{LS_I}$}
\doubleLine
\UI$\BDIAI X \fCenter \Gamma$
\DP
\ \ & \ \
\AX$X \fCenter \WCIRCC \Delta\rule[0mm]{0mm}{0.26cm}$
\LL{\scriptsize $ad_\mathsf{LS_C}$}
\doubleLine
\UI$\BDIAC X \fCenter \Delta$
\DP
\ &\
\AX$\WCIRCC X \fCenter \Delta$
\RightLabel{\scriptsize $ad_\mathsf{LS_C}$}
\doubleLine
\UI$X \fCenter \BBOXC \Delta$
\DP
 \\
\end{tabular}
\end{center}
}

\item Multi-type structural rules for strict-kernel operators:

{\fns
\begin{center}
\begin{tabular}{@{}rlrl}

\AX$\WCIRCI \TOPI \fCenter X$
\LL{\scriptsize $\WCIRC \TOPI$}
\doubleLine
\UI$\XTOP \fCenter X$
\DP
 &
\AX$X \fCenter \WCIRCI \BOTI$
\RL{\scriptsize $\WCIRCI \BOTI$}
\doubleLine
\UI$X \fCenter \XBOT$
\DP

\ & \

\AX$\WCIRCC \TOPC \fCenter X$
\LL{\scriptsize $\WCIRCC \TOPC$}
\doubleLine
\UI$\XTOP \fCenter X$
\DP
 &
\AX$X \fCenter \WCIRCC \BOTC$
\RL{\scriptsize $\WCIRCC \BOTC$}
\doubleLine
\UI$X \fCenter \XBOT$
\DP
 \\

 & & & \\

\AX$\BDIAI \WCIRCI \Gamma \fCenter \Gamma'$
\LeftLabel{\scriptsize $\BDIAI \WCIRCI$}
\doubleLine
\UI$\Gamma \fCenter \Gamma'$
\DP
 &
\AX$\Gamma' \fCenter \BBOXI \WCIRCI \Gamma$
\RL{\scriptsize $\BBOXI \WCIRCI$}
\doubleLine
\UI$\Gamma' \fCenter \Gamma$
\DP
\ &\
\AX$\BDIAC \WCIRCC \Delta \fCenter \Delta'$
\LL{\scriptsize $\BDIAC \WCIRCC$}
\doubleLine
\UI$\Delta \fCenter \Delta'$
\DP
 &
\AX$\Delta' \fCenter \BBOXC \WCIRCC \Delta$
\RL{\scriptsize $\BBOXC \WCIRCC$}
\doubleLine
\UI$\Delta' \fCenter \Delta$
\DP
 \\

 & & & \\

\AX$\WCIRCI \BDIAI X \fCenter Y$
\LL{\scriptsize $\WCIRCI \BDIAI$}
\UI$X \fCenter Y$
\DP
 &
\AX$Y \fCenter \WCIRCI \BBOXI X$
\RL{\scriptsize $\WCIRCI \BBOXI$}
\UI$Y \fCenter X$
\DP
\ &\
\AX$\WCIRCC \BDIAC X \fCenter Y$
\LL{\scriptsize $\WCIRCC \BDIAC$}
\UI$X \fCenter Y$
\DP
 &
\AX$Y \fCenter \WCIRCC \BBOXC X$
\RL{\scriptsize $\WCIRCC \BBOXC$}
\UI$Y \fCenter X$
\DP
 \\
\end{tabular}
\end{center}
}

\item Multi-type structural rules for lax-kernel operators:

{\fns
\begin{center}
\begin{tabular}{@{}rlrl}

\AX$\BCIRCI \XTOP \fCenter \Pi$
\LL{\scriptsize $\BCIRCI \TOPLI$}
\doubleLine
\UI$\TOPLI \fCenter \Pi$
\DP
 &
\AX$\Pi \fCenter \BCIRCI \XBOT$
\RL{\scriptsize $\BCIRCI \BOTLI$}
\doubleLine
\UI$\Pi \fCenter \BOTLI$
\DP

\ &\

\AX$\BCIRCC \XTOP \fCenter \Sigma$
\LL{\scriptsize $\BCIRC \TOPLC$}
\doubleLine
\UI$\TOPLC \fCenter \Sigma$
\DP
 &
\AX$\Sigma \fCenter \BCIRCC \XBOT$
\RL{\scriptsize $\BCIRCC \BOTLC$}
\doubleLine
\UI$\Sigma \fCenter \BOTLC$
\DP
 \\

 & & & \\

\AX$\Pi \fCenter \Pi'$
\LeftLabel{\scriptsize $\WDIAI \BCIRCI$}
\UI$\WDIAI \BCIRCI \Pi \fCenter \Pi'$
\DP
 &
\AX$\Pi' \fCenter \Pi$
\RL{\scriptsize $\WBOXI \BCIRCI$}
\UI$\Pi' \fCenter \WBOXI \BCIRCI \Pi$
\DP

\ &\

\AX$\Sigma \fCenter \Sigma'$
\LeftLabel{\scriptsize $\WDIAC \BCIRCC$}
\UI$\WDIAC \BCIRCC \Sigma \fCenter \Sigma'$
\DP
 &
\AX$\Sigma' \fCenter \Sigma$
\RL{\scriptsize $\WBOXC \BCIRCC$}
\UI$\Sigma' \fCenter \WBOXC \BCIRCC \Sigma$
\DP

 \\

 & & & \\

\AX$\BCIRCI \WDIAI \Pi \fCenter \Pi'$
\LeftLabel{\scriptsize $\BCIRCI \WDIAI$}
\doubleLine
\UI$\Pi \fCenter \Pi'$
\DP
 &
\AX$\Pi' \fCenter \BCIRCI \WBOXI \Pi$
\RL{\scriptsize $\BCIRCI \WBOXI$}
\doubleLine
\UI$\Pi' \fCenter \Pi$
\DP

\ & \

\AX$\BCIRCC \WDIAC \Sigma \fCenter \Sigma'$
\LeftLabel{\scriptsize $\BCIRCC \WDIAC$}
\doubleLine
\UI$\Sigma \fCenter \Sigma'$
\DP
 &
\AX$\Sigma' \fCenter \BCIRCC \WBOXC \Sigma$
\RL{\scriptsize $\BCIRCC \WBOXC$}
\doubleLine
\UI$\Sigma' \fCenter \Sigma$
\DP
 \\
\end{tabular}
\end{center}
}

\item Multi-type structural rules for the correspondence between kernels:

{\fns
\begin{center}
\begin{tabular}{rl}
\AX$\WCIRCI\BDIAI X \fCenter Y$
\LL{\scriptsize $\WCIRC \BDIA$}
\doubleLine
\UI$\WCIRCC\BDIAC X \fCenter Y$
\DP
\ \ & \ \
\AX$Y \fCenter \WBOXI\BCIRCI X$
\RL{\scriptsize $\WBOX \BCIRC$}
\doubleLine
\UI$Y \fCenter \WBOXC\BCIRCC X$
\DP
 \\
\end{tabular}
\end{center}
}


\item Logical rules for multi-type connectives related to strict kernels:
{\fns
\begin{center}
\begin{tabular}{@{}rlrl}
\AX$\BDIAI A \fCenter \Gamma$
\LL{\scriptsize $\bboxl$}
\UI$\bboxl A \fCenter \Gamma$
\DP
\ & \
\AX$\rule[0mm]{0mm}{0.424cm}X \fCenter A$
\RL{\scriptsize $\bdiai$}
\UI$\BDIAI X \fCenter \bboxl A$
\DP
\ \ & \ \
\AX$A \fCenter X\rule[0mm]{0mm}{0.39cm}$
\LL{\scriptsize $\bboxc$}
\UI$\bboxc A \fCenter \BBOXC X$
\DP
\ & \
\AX$\Delta \fCenter \BBOXC A\rule[0mm]{0mm}{0.3cm}$
\RL{\scriptsize $\bboxc$}
\UI$\Delta \fCenter \bboxc A$
\DP
 \\

 & & & \\

\AX$\WCIRCI \alpha \fCenter X$
\LL{\scriptsize $\wcirci$}
\UI$\wcirci \alpha \fCenter X$
\DP
\ & \
\AX$X \fCenter \WCIRCI \alpha$
\RL{\scriptsize $\wcirci$}
\UI$X \fCenter \wcirci \alpha$
\DP

\ \ & \ \

\AX$\WCIRCC \delta \fCenter X$
\LL{\scriptsize $\wcircc$}
\UI$\wcircc \delta \fCenter X$
\DP
\ & \
\AX$X \fCenter \WCIRCC \delta$
\RL{\scriptsize $\wcircc$}
\UI$X \fCenter \wcircc \delta$
\DP
 \\
\end{tabular}
\end{center}
}

\item Logical rules for multi-type connectives related to lax kernels:
{\fns
\begin{center}
\begin{tabular}{rlrl}

\AX$\WDIAI \pi \fCenter X$
\LL{\scriptsize $\wdiai$}
\UI$\wdiai \pi \fCenter X$
\DP
\ & \
\AX$\Pi \rule[0mm]{0mm}{0.424cm}\fCenter \pi$
\RL{\scriptsize $\wdiai$}
\UI$\WDIAI \Pi \fCenter \wdiai \pi$
\DP

\ \ & \ \

\AX$\sigma \fCenter \Sigma\rule[0mm]{0mm}{0.39cm}$
\LL{\scriptsize $\wboxc$}
\UI$\wboxc \sigma \fCenter \WBOXC \Sigma$
\DP
\ & \
\AX$X \fCenter \WBOXC \sigma\rule[0mm]{0mm}{0.3cm}$
\RL{\scriptsize $\wboxc$}
\UI$X \fCenter \wboxc \sigma$
\DP
 \\

 & & & \\

\AX$\BCIRCI A \fCenter \Pi$
\LL{\scriptsize $\wcirci$}
\UI$\bcirci A \fCenter \Pi$
\DP
\ & \
\AX$\Pi \fCenter \BCIRCI A$
\RL{\scriptsize $\bcirci$}
\UI$\Pi \fCenter \bcirci A$
\DP

\ \ & \ \

\AX$\BCIRCC A \fCenter \Sigma$
\LL{\scriptsize $\bcircc$}
\UI$\bcircc A \fCenter \Sigma$
\DP
\ & \
\AX$\Sigma \fCenter \BCIRCC A$
\RL{\scriptsize $\bcircc$}
\UI$\Sigma \fCenter \bcircc A$
\DP
 \\
\end{tabular}
\end{center}
}

\item Logical rules for lattice connectives:

{\fns
\begin{center}
\begin{tabular}{rlrl}

\AX$\XTOP \fCenter X$
\LL{\scriptsize $\xtop$}
\UI$\xtop \fCenter X$
\DP
\ & \
\AXC{$\phantom{\XTOP \fCenter}\rule[0mm]{0mm}{0.38cm}$}
\RL{\scriptsize $\xtop$}
\UI$\XTOP \fCenter \xtop$
\DP

\ \ & \ \

\AXC{$\phantom{[\XTOP] \fCenter}\rule[0mm]{0mm}{0.34cm}$}
\LL{\scriptsize $\xbot$}
\UI$\xbot \fCenter \XBOT$
\DP
\ & \
\AX$X \fCenter \XBOT$
\RL{\scriptsize $\xbot$}
\UI$X \fCenter \xbot$
\DP
 \\

 & & & \\

\AX$A_{i\in\{1,2\}} \fCenter X$
\LL{\scriptsize $\xand_i$}
\UI$A_1 \xand A_2 \fCenter X$
\DP
\ & \
\AX$X \rule[0mm]{0mm}{0.38cm}\fCenter A$
\AX$X \fCenter B$
\RL{\scriptsize $\xand$}
\BI$X \fCenter A \xand B$
\DP

\ \ & \ \

\AX$A \fCenter X$
\AX$B \fCenter X\rule[0mm]{0mm}{0.38cm}$
\LL{\scriptsize $\xor$}
\BI$A \xor B \fCenter X$
\DP
\ & \
\AX$X \fCenter A_{i\in\{1,2\}}$
\RL{\scriptsize $\xor_i$}
\UI$X \fCenter A_1 \xor A_2$
\DP
 \\

\end{tabular}
\end{center}
}

\end{itemize}

The proper display calculi for the subvarieties of $\mathrm{aKa}$ discussed in Section  \ref{sec:kent algebras} are obtained by adding the following rules:

{\fns
\begin{center}
\begin{tabular}{|c|c|c|}
\hline
\ Logic \ & \ Calculus \ & \mc{1}{c|}{Rules}\\

\hline
\ $\mathrm{H.aKa5'}$\
 &
\ $\mathrm{D.aKa5'}$\
 &
\rule[-10mm]{0mm}{2.13cm}
\begin{tabular}{rl}
\AX$\WDIAI \Pi \fCenter X$
\LL{\scriptsize $\WCIRCI \BDIAI \WDIAI$}
\UI$\WCIRCI \BDIAI \WDIAI \Pi \fCenter X$
\DP
\ & \
\AX$X \fCenter \WBOXC \Sigma$
\RL{\scriptsize $\WCIRCC \BBOXC \WBOXC$}
\UI$X \fCenter \WCIRCC \BBOXC \WBOXC \Sigma$
\DP
 \\


\AX$\WDIAI \BCIRCI \WCIRCI \Gamma \fCenter X$
\LL{\scriptsize $\WDIAI \BCIRCI \WCIRCI$}
\UI$\WCIRCI \Gamma \fCenter X$
\DP
\ & \
\AX$X \fCenter \WBOXC \BCIRCC \WCIRCC \Delta$
\RL{\scriptsize $\WBOXC \BCIRCC \WCIRCC$}
\UI$X \fCenter \WCIRCC \Delta$
\DP
\rule[0mm]{0mm}{0.7cm}
\
 \\
\end{tabular}
 \\
\hline

%




\ $\mathrm{K\textrm{-}IA3}_{\ell}$\
 &
\ $\mathrm{D.K\textrm{-}IA3}_{\ell}$\
 &

\rule[-4mm]{0mm}{1cm}
\AX$X \fCenter \WBOXI \BCIRCI Y$
\AX$\WDIAC \BCIRCC X \fCenter Y$
\RL{\scriptsize $k\textrm{-}ia3_{\ell}$}
\BI$X \fCenter Y$
\DP
 \\

\hline


\end{tabular}
\end{center}
}

These calculi enjoy the properties of soundness, completeness, conservativity, cut elimination and subformula property the verification of which is  standard and follows from the general theory of proper display calculi (cf.~\cite{linearlogPdisplayed,greco2017multi,GrecoPalmigianoLatticeLogic,bilattice,fei2018,apostolos2018,Inquisitive}). These verifications are discussed in the appendix.

\bibliography{ref}
\bibliographystyle{plain}

\newpage
\appendix

\section{Properties}
Throughout this section, we let $\mathsf{K}\in \{aKa, aKa5', 
K\text{-}IA3_{\ell}\}$, and $\mathsf{HK}$ the class of heterogeneous algebras corresponding to $\mathsf{K}$. Further, we let  
$\mathrm{D.K}$ denote the multi-type calculus for the logic $\mathrm{H.K}$ canonically associated with $\mathsf{K}$.
\label{ssec: properties}
\subsection{Soundness for perfect $\mathsf{HK}$ algebras}
\label{ssec:soundness}
The  verification of the soundness of the rules of $\mathrm{D.K} $ w.r.t.~the semantics of {\em perfect} elements of $\mathsf{HK}$ (see Definition \ref{def: heterogeneous algebras}) is analogous to that of many other multi-type calculi (cf.~\cite{linearlogPdisplayed,greco2017multi,GrecoPalmigianoLatticeLogic,bilattice,fei2018,apostolos2018,Inquisitive}). 
Here we only discuss the soundness of the rule $k\textrm{-}ia3_{\ell}$.
By definition, the following quasi-inequality is valid on every  K-IA3$_{\ell}$:
\[\Box_{\ell} a\leq  \Box_{\ell} b \mbox{ and } \Diamond_{\ell} a\leq \Diamond_{\ell}  b\mbox{ imply } a\leq b.\]
This quasi-inequality  equivalently translates into the multi-type language as follows:
\[\wboxc\bcircc a\leq  \wboxc\bcircc b \mbox{ and } \wdiai\bcirci a\leq \wdiai\bcirci  b\mbox{ imply } a\leq b.\]
By adjunction, the quasi-inequality above can be equivalently rewritten as follows:
\[\wdiac\bcircc\wboxc\bcircc a\leq  b \mbox{ and }  a\leq \wboxi\bcirci\wdiai\bcirci  b\mbox{ imply } a\leq b,\]
which, thanks to a well known property of adjoint maps, simplifies as:
\[\wdiac\bcircc a\leq  b \mbox{ and }  a\leq \wboxi\bcirci  b\mbox{ imply } a\leq b.\]
Hence, 
the quasi-inequality above is equivalent to the following inequality:
\[a\wedge \wboxi\bcirci  b\leq \wdiac\bcircc a\vee  b.\]
The inequality above is analytic inductive (cf.~\cite[Definition 55]{greco2016unified}), and hence running ALBA on this inequality produces:
\begin{center}
\begin{tabular}{r l l}
   & $\forall a\forall b  [ a\wedge \wboxi\bcirci  b\leq \wdiac\bcircc a\vee  b]$ &\\
iff & $\forall p\forall q\forall a\forall b    [ (p\leq a\wedge \wboxi\bcirci  b \ \& \ \wdiac\bcircc a\vee  b\leq q)\Rightarrow p \leq q]$ &\\
iff & $\forall p\forall q\forall a\forall b  [ (p\leq a \ \& \ p\leq \wboxi\bcirci  b \ \&\ b\leq q \ \&\ \wdiac\bcircc a\leq q)\Rightarrow p\leq q]$ &\\
iff & $\forall p\forall q [ (p\leq \wboxi\bcirci  q \ \&\ \wdiac\bcircc p\leq q)\Rightarrow p\leq q]$. &\\
\end{tabular}
\end{center}
The last quasi-inequality above is the semantic translation of the rule $k\textrm{-}ia3_{\ell}$:
\begin{center}
\AX$X \fCenter \WBOXI \BCIRCI Y$
\AX$\WDIAC \BCIRCC X \fCenter Y$
\RL{\scriptsize $k\textrm{-}ia3_{\ell}$}
\BI$X \fCenter Y$
\DP
\end{center}
which we then proved to be sound on every perfect heterogeneous  K-IA3$_{\ell}$, by the soundness of the ALBA steps.
Likewise, the defining condition of K-IA3$_{\ell}$ translates into the inequality \[a\xand \wcircc\bdiac b\leq \wcirci\bboxi a\xor b,\] which, however, is not analytic inductive, and hence it cannot be transformed into an analytic rule via ALBA. 
\subsection{Completeness}\label{ssec: completeness}
 Let  $A^\tau\vdash B^\tau$ be the translation of any sequent $A\vdash B$ in the language of $\mathrm{H.K}$  into the language of $\mathrm{D.K}$ induced by the correspondence between $\mathsf{K}$ and $\mathsf{HK}$ described in Section \ref{sec:multi-type presentation kent}. 

\begin{proposition}
For every $\mathrm{H.K}$-derivable sequent $A \fCenter B$, the sequent $A^{\tau} \fCenter B^{\tau}$ is derivable in $\mathrm{D.K}$.
\end{proposition}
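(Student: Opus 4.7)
The plan is to proceed by induction on the height of the $\mathrm{H.K}$-derivation of $A \vdash B$. Since $\mathrm{H.K}$ is the Hilbert-style logic canonically associated with the variety $\mathsf{HK}$, its axioms split naturally into: (a) the lattice axioms for each of the five types $\mathsf{L}, \mathsf{S_I}, \mathsf{S_C}, \mathsf{L_I}, \mathsf{L_C}$; (b) the adjunction axioms H3; (c) the retraction/co-retraction equations H4; (d) the axioms witnessing that $\wcirci, \wcircc, \bcirci, \bcircc$ are (bounded) lattice homomorphisms as per H2; and (e) the subvariety-specific inequalities (the four inequalities for haKa5' and the quasi-inequality for hK-IA3$_\ell$). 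For the inductive step, modus ponens is simulated by $\mathrm{Cut}$ together with the usual congruence properties derivable in any display-style calculus.

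For the base cases, the routine work is as follows. The lattice axioms in type $\mathsf{L}$ are immediate from the logical rules for $\xtop, \xbot, \xand, \xor$, identity, and cut; in the auxiliary types $\mathsf{S_I}, \mathsf{S_C}, \mathsf{L_I}, \mathsf{L_C}$, the pure-type structural connectives (introduced only at the structural level) are used together with their defining structural rules to exhibit the needed derivations, exactly as in \cite{GLMP18}. The H3 adjunctions correspond exactly to the multi-type display rules $ad_\mathsf{LS_I}$ and $ad_\mathsf{LS_C}$ (together with their analogues for the lax kernels, which are listed as display rules in Section \ref{ssec:Display calculus}). The H4 retractions correspond to the structural rules $\BDIAI \WCIRCI$, $\BBOXI \WCIRCI$, $\BDIAC \WCIRCC$, $\BBOXC \WCIRCC$ and their counterparts in the lax kernels $\BCIRCI \WDIAI$, $\BCIRCI \WBOXI$, etc. The H2 homomorphism conditions (preservation of $\top, \bot, \land, \lor$ by the four circled maps) are derived by combining the logical rules for the circled connectives, the pure-type structural rules in the auxiliary types, and the rules $\WCIRCI \TOPI$, $\WCIRCI \BOTI$, $\WCIRCC \TOPC$, etc., which align the constants across types; these derivations are entirely analogous to those for rough algebras in \cite{GLMP18}.

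The main obstacle is the subvariety-specific case, most notably the quasi-inequality defining hK-IA3$_\ell$. Here one must exhibit a derivation of the translation of $a \xand \wboxc\bcircc b \vdash \wdiai\bcirci a \xor b$ (or, more precisely, the analytic inductive reformulation $a \wedge \wboxi\bcirci b \leq \wdiac\bcircc a \vee b$ obtained in Section \ref{ssec:soundness}), using the $k\text{-}ia3_{\ell}$ rule together with the logical rules for $\xand, \xor$, the introduction rules for $\bcirci, \bcircc, \wdiac, \wboxi$, and identity on the lattice atoms; this is the point where the design of the structural rule (driven by the ALBA calculation in Section \ref{ssec:soundness}) is essential, since no simpler combination of display and lattice rules suffices in the nondistributive setting. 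For $\mathrm{D.aKa5'}$, each of the four analytic inductive inequalities \eqref{eq:aKafive} is matched by one of the four structural rules added to the calculus, and the derivations follow the standard pattern: apply the adjunction/display rules to bring the principal occurrence into position, apply the structural rule, then re-introduce the logical operators. For aKa, the remaining basic axioms \eqref{eq:reflexive} and \eqref{eq:transitive} are derivable from the structural rules relating the kernel operators to each other (in particular the rules $\WCIRC \BDIA$ and $\WBOX \BCIRC$ expressing the correspondence between the strict and lax kernels) together with the retraction rules H4, and the derivations are of the same flavour as those in \cite[Appendix]{GLMP18}. Modulo this systematic but lengthy case analysis, the proposition then follows by straightforward induction, with cut serving as the glue in the inductive step.
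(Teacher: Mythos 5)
Your strategy --- induction on the $\mathrm{H.K}$-derivation, checking that the translation of each axiom is derivable in $\mathrm{D.K}$ and simulating the rules by cut, with the only non-routine case being K-IA3$_{\ell}$, handled by first deriving $A \xand \wboxc\bcircc B \fCenter \wdiai\bcirci A \xor B$ via the structural rule $k\textrm{-}ia3_{\ell}$ and then cutting against the two premises --- is exactly the paper's. The one slip is in the framing: $\mathrm{H.K}$ is the single-type logic canonically associated with $\mathsf{K}$ (not a Hilbert system for the heterogeneous algebras $\mathsf{HK}$), so the base cases are the translations of the single-type conditions \eqref{eq:adjunction}, \eqref{eq:reflexive}, \eqref{eq:transitive} and the subvariety-specific axioms rather than H1--H4; your final paragraph in fact covers precisely these, so the substance is unaffected, though note that the translated adjunctions \eqref{eq:adjunction} also require the kernel-correspondence rules $\WCIRC\BDIA$ and $\WBOX\BCIRC$ (since $\Diamond_s=\wcircc\bdiac$ and $\Box_s=\wcirci\bboxi$ factor through different kernels), not just the multi-type display rules.
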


Below we provide the multi-type translations of the single-type sequents corresponding to inequalities \eqref{eq:adjunction}. All of them are derivable in D.AKA by logical introduction rules, display rules, and the rules $\WBOX \BCIRC$ and $\WCIRC \BDIA$.

\begin{center}
\begin{tabular}{@{}lcl@{}}
$\Diamond_s A \vdash B\mbox{\ \ iff\ \ } A \vdash \Box_s B$ & $\quad\rightsquigarrow\quad$ & $\wcircc \bdiac A \vdash B\mbox{\ \ iff\ \ } A \vdash \wcirci \bboxi B$ \\
$\Diamond_{\ell} A \vdash B \mbox{\ \ iff\ \ } A \vdash \Box_{\ell} B$ & $\quad\rightsquigarrow\quad$ & $\wdiai \bcirci A \vdash B \mbox{\ \ iff\ \ } A \vdash \wboxc \bcircc B$ \\
\end{tabular}
\end{center}

Below we provide the multi-type translations of the single-type sequents corresponding corresponding to inequalities \eqref{eq:reflexive} and \eqref{eq:transitive}, respectively. All of them are derivable in D.AKA by logical introduction rules and display rules.

\begin{center}
\begin{tabular}{@{}lclclcl@{}}
$\Box_s A \vdash A$ & $\quad\rightsquigarrow\quad$ & $\wcirci \bboxi A \vdash A$
 & \ \ \ \ \ \ \ \ \ \ &
$\Box_s A \vdash \Box_s\Box_s A$ & $\quad\rightsquigarrow\quad$ & $\wcirci \bboxi A \vdash \wcirci \bboxi\wcirci \bboxi A$ \\

$A \vdash \Diamond_s A$ & $\quad\rightsquigarrow\quad$ & $A \vdash \wcircc \bdiac A$
 & &
$\Diamond_s \Diamond_s A \vdash \Diamond_s A$ & $\rightsquigarrow$ & $\wcircc \bdiac \wcircc \bdiac A \vdash \wcircc \bdiac A$ \\

$A \vdash \Box_{\ell} A$ & $\quad\rightsquigarrow\quad$ & $A \vdash \wboxc \bcircc A$
 & &
$\Box_{\ell}\Box_{\ell} A \vdash \Box_{\ell} A$ & $\rightsquigarrow$ & $\wboxc \bcircc \wboxc \bcircc A \vdash \wboxc \bcircc A$ \\

$\Diamond_{\ell} A \vdash A$ & $\quad\rightsquigarrow\quad$ & $\wdiai \bcirci A \vdash A$
 & &
$\Diamond_{\ell} A \vdash \Diamond_{\ell}\Diamond_{\ell} A$ & $\rightsquigarrow$ & $\wdiai \bcirci A \vdash \wdiai \bcirci \wdiai \bcirci A$ \\
\end{tabular}
\end{center}

Below we provide the multi-type translation of the single-type sequents corresponding to inequalities \eqref{eq:aKafive}. All of them are derivable in D.AKA5'. 

\begin{center}
\begin{tabular}{@{}lcl@{}}
$\Diamond_{\ell} A \vdash \Box_s \Diamond_{\ell} A$ & $\quad\rightsquigarrow\quad$ & $\wdiai \bcirci A \vdash \wcirci \bboxi \wdiai \bcirci A$ \\
$\Diamond_s \Box_{\ell} A \vdash \Box_{\ell} A$ & $\rightsquigarrow$ & $\wcircc \bdiac \wboxc \bcircc A \vdash \wboxc \bcircc A$ \\
$\Box_s A \vdash \Diamond_{\ell} \Box_s A$ & $\rightsquigarrow$ & $\wcirci \bboxi A \vdash \wdiai \bcirci \wcirci \bboxi A$ \\
$\Box_{\ell} \Diamond_s A \vdash \Diamond_s A$ & $\rightsquigarrow$ & $\wboxc \bcircc \wcircc \bdiac A \vdash \wcircc \bdiac A$ \\
\end{tabular}
\end{center}
Below we provide the multi-type translations of the single-type rules corresponding to quasi-inequality 
\eqref{eq:K-IA3 lax}, respectively.


\begin{center}
$\Diamond_{\ell} A\fCenter \Diamond_{\ell} B$ \ and \  $\Box_{\ell} A \fCenter \Box_{\ell} B$ \ imply\  $A \fCenter B \quad \rightsquigarrow \quad$ \\ $\wdiai \bcirci A \fCenter \wdiai \bcirci B$ \ and\  $\wboxc \bcircc A \fCenter \wboxc \bcircc B$ \ imply \  $A \fCenter B$
\end{center}

Below, we derive \eqref{eq:K-IA3 lax}. Firstly, $A \xand \wboxc \bcircc B \fCenter \wdiai \bcirci A \xor B$ is derivable via $k\textrm{-}ia3_\ell$ by means of the following derivation $\mathcal{D}$:

{\fns
\begin{center}
\AX$B \fCenter B$
\UI$B \fCenter \wdiai \bcirci A \XOR B$
\UI$B \fCenter \wdiai \bcirci A \xor B$
\UI$\BCIRCC B \fCenter \BCIRCC (\wdiai \bcirci A \xor B)$
\UI$\bcircc B \fCenter \BCIRCC (\wdiai \bcirci A \xor B)$
\UI$\wboxc \bcircc B \fCenter \WBOXC \BCIRCC (\wdiai \bcirci A \xor B)$
\UI$A \XAND \wboxc \bcircc B \fCenter \WBOXC \BCIRCC (\wdiai \bcirci A \xor B)$
\UI$A \xand \wboxc \bcircc B \fCenter \WBOXC \BCIRCC (\wdiai \bcirci A \xor B)$

\AX$A \fCenter A$
\UI$A \XAND \wboxc \bcircc B \fCenter A$
\UI$A \xand \wboxc \bcircc B \fCenter A$
\UI$\BCIRCI (A \xand \wboxc \bcircc B) \fCenter \BCIRCI A$
\UI$\BCIRCI (A \xand \wboxc \bcircc B) \fCenter \bcirci A $
\UI$\WDIAI \BCIRCI (A \xand \wboxc \bcircc B) \fCenter \wdiai \bcirci A$
\UI$\WDIAI \BCIRCI (A \xand \wboxc \bcircc B) \fCenter \wdiai \bcirci A \XOR B$
\UI$\WDIAI \BCIRCI (A \xand \wboxc \bcircc B) \fCenter \wdiai \bcirci A \xor B$
\RL{\fns $k\textrm{-}ia3_{\ell}$}
\BI$A \xand \wboxc \bcircc B \fCenter \wdiai \bcirci A \xor B$
\DP
\end{center}
 }

Assuming $\wdiai \bcirci A \fCenter \wdiai \bcirci B$  and  $\wboxc \bcircc A \fCenter \wboxc \bcircc B$, we derive $A\fCenter B$ via cut as follows:

{\fns
\begin{center}
\AX$A \fCenter A$
\AXC{$\wboxc \bcircc A \fCenter \wboxc \bcircc B$}
\dashedLine
\UIC{\fns $Id_{\mathrm{L}} + \bcircc + adj_{\mathrm{LS_C}} + Cut_{\mathrm{L}}$}
\dashedLine
\UIC{$A \fCenter \wboxc \bcircc B$}
\RL{\fns $\xand$}
\BIC{$A \XAND A \fCenter A \xand \wboxc \bcircc B$}
\LL{\fns $C_\mathsf{L}$}
\UIC{$A \fCenter A \xand \wboxc \bcircc B$}

\AXC{$\mathcal{D}$}
\noLine
\UIC{$A \xand \wboxc \bcircc B \fCenter \wdiai \bcirci A \xor B$}
\AXC{$\wdiai \bcirci A \fCenter \wdiai \bcirci B$}
\dashedLine
\UIC{\fns $Id_{\mathrm{L}} + \bcirci + adj_{\mathrm{LS_I}} + Cut_{\mathrm{L}}$}
\dashedLine
\UIC{$\wdiai \bcirci A \fCenter B$}
\AXC{$B \fCenter B$}
\LL{\fns $\xor$}
\BIC{$\wdiai \bcirci A \xor B \fCenter B \XOR B$}
\RL{\fns $C_\mathsf{L}$}
\UIC{$\wdiai \bcirci A \xor B \fCenter B$}
\RL{\fns $Cut_{\mathsf{L}}$}
\BIC{$A \xand \wboxc \bcircc B \fCenter B$}

\RL{\fns $Cut_{\mathsf{L}}$}
\BIC{$A \fCenter B$}
\DP
\end{center}
}

\subsection{Conservativity}\label{ssec: conservativity}


To argue that $\mathrm{D.K}$ is conservative w.r.t.~$\mathrm{H.K}$, we follow the standard proof strategy discussed in \cite{greco2016unified,GKPLori}. 
We need to show that, for all formulas $A$ and $B$ in the language of $\mathrm{H.K}$,  if $A^\tau \vdash B^\tau$ is a $\mathrm{D.K}$-derivable sequent, then  $A \vdash B$ is derivable in $\mathrm{H.K}$. This claim can be proved using  the following facts: (a) The rules of $\mathrm{D.K}$ are sound w.r.t.~perfect members of $\mathsf{HK}$ (cf.~Section \ref{ssec:soundness});  (b) $\mathrm{H.K}$ is complete w.r.t.~the class of perfect algebras in $\mathsf{K}$;  (c) A perfect element of $\mathsf{K}$ is equivalently presented as  a perfect member of $\mathsf{HK}$ so that the semantic consequence relations arising from each type of structures preserve and reflect the translation. Let $A, B$ be as above. If  $A^\tau \vdash B^\tau$ is $\mathrm{D.K}$-derivable, then by (a),  $\models_{\mathbb{HK}} A^\tau \vdash B^\tau$. By (c), this implies that $\models_{\mathsf{K}}  A\vdash B$, where $\models_{\mathsf{K}}$ denotes the semantic consequence relation arising from the perfect members of class $\mathsf{K}$. By (b), this implies that $A\vdash B$ is derivable in $\mathrm{H.K}$, as required.

\subsection{Cut elimination and subformula property}

 Cut elimination and subformula property for each $\mathrm{D.K}$ are obtained by verifying the assumptions of 
\cite[Theorem 4.1]{TrendsXIII}. All of them except $\mathrm{C}'_8$ are readily satisfied by inspecting the rules. Condition $\mathrm{C}'_8$ requires to check that reduction steps can be performed for every application of  cut in which both cut-formulas are principal, which either remove the original cut altogether or replace it by one or more cuts on formulas of strictly lower complexity.  In what follows, we only show  $\mathrm{C}'_8$ for some heterogeneous connectives. 

{\fns
\begin{center}
\begin{tabular}{ccc}
\bottomAlignProof
\AXC{\ \ \ $\vdots$ \raisebox{1mm}{$\pi_1$}}
\noLine
\UI$\Gamma \fCenter \BBOXI A$
\UI$\Gamma \fCenter \bboxi A$
\AXC{\ \ \ $\vdots$ \raisebox{1mm}{$\pi_2$}}
\noLine
\UI$A \fCenter Y$
\UI$\bboxi A \fCenter \BBOXI Y$
%
\BI$\Gamma \fCenter \BBOXI Y$
\DisplayProof

 & $\ \ \ \ \rightsquigarrow\ \ \ \ \ $ &

\!\!\!\!\!\!\!
\bottomAlignProof
\AXC{\ \ \ $\vdots$ \raisebox{1mm}{$\pi_1$}}
\noLine
\UI$\Gamma \fCenter  \BBOXI A$
\UI$\WCIRCI \Gamma \fCenter A$
\AXC{\ \ \ $\vdots$ \raisebox{1mm}{$\pi_2$}}
\noLine
\UI$A \fCenter Y$
\BI$\WCIRCI \Gamma \fCenter Y$
\UI$ \Gamma \fCenter \BBOXI Y$
\DisplayProof
 \\
\end{tabular}
\end{center}

\begin{center}
\begin{tabular}{ccc}
\bottomAlignProof
\AXC{\ \ \ $\vdots$ \raisebox{1mm}{$\pi_1$}}
\noLine
\UI$X \fCenter \WCIRCI \alpha$
\UI$X \fCenter \wcirci \alpha$
\AXC{\ \ \ $\vdots$ \raisebox{1mm}{$\pi_2$}}
\noLine
\UI$\WCIRCI \alpha  \fCenter Y$
\UI$\wcirci \alpha \fCenter Y$
\BI$X \fCenter Y$
\DisplayProof

 & $\ \ \ \ \ \rightsquigarrow\ \ \ \ \ $ &

\!\!\!\!\!\!\!
\bottomAlignProof
\AXC{\ \ \ $\vdots$ \raisebox{1mm}{$\pi_1$}}
\noLine
\UI$X \fCenter \WCIRCI \alpha$
\UI$\BDIAI X \fCenter \alpha$
\AXC{\ \ \ $\vdots$ \raisebox{1mm}{$\pi_2$}}
\noLine
\UI$\WCIRCI \alpha  \fCenter Y$
\UI$ \alpha \fCenter \BBOXI Y$

\BI$\BDIAI X \fCenter \BBOXI Y$
\UI$X \fCenter Y$
\DisplayProof
\end{tabular}
\end{center}
}
\noindent The remaining cases   are analogous. 

\end{document}